\renewcommand*\env@matrix[1][*\c@MaxMatrixCols c]{%
  \hskip -\arraycolsep
  \let\@ifnextchar\new@ifnextchar
  \array{#1}}
\long\def\eatit#1{}
\newtheorem{thm}{Theorem}[section]
\newtheorem{prop}[thm]{Proposition}
\newtheorem{lem}[thm]{Lemma}
\newtheorem{cor}[thm]{Corollary}
\theoremstyle{definition}
\newtheorem{defn}[thm]{Definition}
\newtheorem{rem}[thm]{Remark}
\newcommand{\p}{\mathfrak p}
\newcommand{\pr}[1]{{{\bf P}^{#1}}}
\newcommand{\field}{K}
\newcommand{\lra}{\longrightarrow}
\newcommand{\ra}{\rightarrow}
\newcommand{\rees}{\mathcal{R}}
\newcommand{\srees}{\mathcal{R}_s}
\DeclareMathOperator{\Sym}{Sym}
\DeclareMathOperator{\reg}{reg}
\newcommand{\XX}{{\mathbf X}}
\newcommand{\ZZ}{{\mathbb Z}}
\title[Symbolic Rees algebra for Fermat configurations]{Ordinary and  Symbolic Rees Algebras\\ for Ideals of  Fermat Point Configurations}
\author{Uwe Nagel and Alexandra Seceleanu}
\thanks{ The first author was partially supported by the National Security Agency under Grant Number H98230-12-1-0247 and by the Simons Foundation under grant  \#317096. The second author  was partially supported by an NSF-AWM mentoring travel grant.}
\begin{document}
\maketitle


\begin{abstract}
Fermat ideals define planar point configurations that are closely related to the intersection locus of the members of a specific pencil of curves. These ideals have gained recent popularity as counterexamples to some proposed containments between symbolic and ordinary powers \cite{refDST}.
We give a systematic treatment of the family of Fermat ideals, describing explicitly the minimal generators and the minimal free resolutions of all their ordinary powers as well as many symbolic powers. We use these to study the ordinary and the symbolic Rees algebra of Fermat ideals. Speci\-fically, we show  that the symbolic Rees algebras of Fermat ideals are Noetherian. Along the way, we give formulas for the Castelnuovo-Mumford regularity of powers of Fermat ideals and we determine their reduction ideals.
\end{abstract}


\section{Introduction}

Let $n\geq 2$ be  an integer, let $\field$ be a field that contains $n$ distinct $n^{\rm th}$ roots of 1, and consider the ideal $$I=(x(y^n-z^n),y(z^n-x^n),z(x^n-y^n))\subset R=\field[x,y,z],$$ which we shall refer to as a {\em Fermat} ideal. The variety described by this ideal is a reduced set of $n^2+3$ points in $\pr{2}$, as shown in \cite[Proposition 2.1]{refHS}. Specifically, $n^2$ of these points form the intersection locus of the pencil of curves spanned by $x^n-y^n$ and $x^n-z^n$, while the other 3 are the coordinate points $[1:0:0], [0:1:0]$ and $[0:0:1]$. The general member of the pencil is isomorphic to the Fermat curve $x^n+y^n+z^n$, justifying the terminology. When $n$ is allowed to vary we obtain an infinite collection of distinct Fermat ideals, which we refer to as the Fermat family of ideals.

The goal of this note is to understand the nature of the ordinary powers and  symbolic powers of Fermat ideals. Our motivation stems from the work of \cite{refDST} and \cite{refHS}, where it is shown that the relation between ordinary and symbolic powers of Fermat ideals is quite surprising.  In \cite{refHH}, motivated by some deep results of \cite{refELS,refHoHu} and also by the fact that this is true for general points as shown in \cite{refBH}, it was asked whether all ideals $I$ defining reduced sets of planar points satisfy the containment  $I^{(3)}\subseteq I^2$. The surprising occurrence of a non-containment $I^{(3)}\not\subseteq I^2$ was first discovered by \cite{refDST} for the simplest case of the Fermat ideal with $n=3$. Later,  in \cite{refHS} the same non-containment was observed to extend to the entire family of Fermat ideals and in \cite{refS},  the minimal free resolutions of the second and third powers of $I$ were used to give an alternate justification for the  non-containment. 

Rather than focusing on specific ordinary or symbolic powers, in this paper we take a global approach by means of  assembling all powers of these ideals into bi-graded algebras. Our main objects of interest  are the \emph{Rees algebra} of $I$  defined as $\rees(I)=\oplus_{i\geq0} I^it^i$ and the \emph{symbolic Rees algebra} of $I$ given by $\srees(I)=\bigoplus_{i\geq0} I^{(i)}t^i$, respectively. 
We study the properties of these Rees algebras, often in close connection with the homological properties of the various powers of Fermat ideals.

Our paper is organized as follows. In section \ref{sect2} we prove that the Rees algebra in the case of the Fermat ideals  is as simple as possible, namely they have linear type. We use this to derive an explicit formula for the minimal free resolutions of all ordinary powers of Fermat ideals. In stark contrast to the Rees algebra, the symbolic Rees algebra of a homogeneous ideal may in general not be Noetherian, even for ideals of points. Perhaps the most famous illustration of this phenomenon is given by Nagata in  \cite{refNa}, where he constructs a counterexample to Hilbert's fourteenth problem.  Although criteria that force symbolic Rees algebras of certain ideals to be finitely generated have been given (see \cite{refSch} or \cite{refHu}), not many interesting examples of such ideals that represent geometric collections of points are known. In section \ref{sec:symb Rees} we show that, in the case of Fermat ideals, the symbolic Rees algebra is Noetherian. It follows in particular that a sufficient condition for a symbolic Rees algebra being Noetherian established in \cite{refHH} is not necessary. In order to obtain our result on symbolic Rees algebras, we need to completely determine the minimal generators and the minimal free resolutions of certain families of non-reduced ideals (fat points) supported at the points of a Fermat configuration. These  results form the technical core of the paper and take up the bulk of section \ref{sect3}. Furthermore, they allow us to determine the Castelnuovo-Mumford regularity for all ordinary powers and most symbolic powers of Fermat ideals. As another application we provide some explicit minimal homogeneous reductions and show that they have reduction number one in section \ref{sect5}.


\section{The Rees algebra of the Fermat ideals and resolutions of ordinary powers}\label{sect2}

In the following, we employ the terminology {\em almost complete intersection} to mean an ideal minimally generated by a set of generators that has cardinality at most one higher than the height of the ideal.  We call {\em strict almost complete intersections} those ideals minimally generated by a set of generators that has cardinality exactly one higher than the height of the ideal.   Note that our Fermat ideals are strict almost complete intersections. 

We start by recalling  the description of the ordinary and symbolic Rees algebras.   
 
 \begin{defn} 
         \label{defRees}
 Denote  by $I=(f_1, f_2,\ldots, f_n)\subseteq R$ an $n$-generated homogeneous ideal of a polynomial ring $R$. Let $S=R[T_1,T_2,\ldots,T_n]$ denote a bigraded polynomial ring where the variables of $R$ have degree $(1,0)$ and the variables $T_i$ have degree $(\deg(f_i),1)$.
 The $R$-algebra epimorphism  $R[T_1,T_2,\ldots,T_3] \to \rees(I)$ sending $T_i\mapsto f_it$ gives presentations of the  symmetric algebra $\Sym(I)$ and Rees algebra $\rees(I)$ respectively as quotients of the bigraded polynomial ring $S=R[T_1,T_2,\ldots,T_3]$.  Writing $L$ for the kernel of this epimorphism yields:
$$ \rees(I)=S/L, \mbox{ where } L=\left\{F(T_1,T_2,\ldots,T_n):F(f_1,f_2,\ldots,f_n)=0\right\}.$$ 
In turn, the presentation of the symmetric algebra of $I$ only takes into account the bidegree $(*,1)$ relations between generators of $I$:
$$\Sym(I)=S/L_1, \mbox{ where } L_1=\left\{\sum_{i=1}^n b_iT_i : \sum_{i=1}^n b_if_i=0\right\}.$$
\end{defn}
The structure of these algebras does not depend on the set $f_1,\ldots, f_n$ of minimal generators of $I$ chosen, but only on $I$ itself. Furthermore, there is a canonical graded surjection $\Sym(I)\twoheadrightarrow \rees(I)$.

\begin{defn}
If for some ideal $I$ there is an isomorphism  $\Sym(I)\simeq \rees(I)$, $I$ is said to have linear type.   Equivalently, $I$ has linear type if and only if  the  ideal of equations of the Rees algebra  is generated in bidegree $(*,1)$. In the notation of Definition \ref{defRees}, if $L$ is the ideal of relations for $\rees(I)$, this means that $L=L_1$. 
\end{defn}

We start with a structural result about the Rees algebra of almost complete intersections which define reduced sets of points. 

\begin{lem}\label{lintype}
Let $I$ be an almost complete intersection ideal defining a reduced set of points in $\pr{N}$. Then $I$ is an ideal of linear type.
\end{lem}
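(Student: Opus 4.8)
The plan is to realize $I$ as an ideal of linear type by checking the two hypotheses of the linear-type criterion of Herzog, Simis and Vasconcelos: the local condition $G_\infty$ on the number of generators together with the sliding depth condition on the Koszul homology. Write $R=\field[x_0,\dots,x_N]$, so $\dim R=N+1$, and let $\mu(-)$ denote the minimal number of generators. Since $I$ defines points, $\h I=N$ and $\dim R/I=1$; as $I$ has height $N$ we always have $\mu(I)\ge N$, and the almost complete intersection hypothesis gives $\mu(I)\le N+1$. If $\mu(I)=N$ then $I$ is generated by a regular sequence, hence a complete intersection, which is of linear type because its relations are the (linear) Koszul relations; so I may assume $I$ is a strict almost complete intersection with $\mu(I)=N+1$. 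I first record that $R/I$ is Cohen--Macaulay: being the homogeneous coordinate ring of a \emph{reduced} set of points it is reduced and one-dimensional, so it has no embedded primes and $\depth R/I=1=\dim R/I$.

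Next I would verify $G_\infty$, that is, $\mu(I_\p)\le\h\p$ for every prime $\p\supseteq I$. The primes containing $I$ are the point ideals $\p_1,\dots,\p_s$, each of height $N$, and the irrelevant maximal ideal $\mathfrak m$, of height $N+1$. This is exactly where reducedness is used: at each $\p_i$ the local ring $R_{\p_i}$ is regular of dimension $N$ and, the points being reduced, $I_{\p_i}$ is its maximal ideal, so $\mu(I_{\p_i})=N=\h\p_i$; and $\mu(I_{\mathfrak m})=\mu(I)=N+1=\h\mathfrak m$. Hence $\mu(I_\p)\le\h\p$ for all $\p\supseteq I$, so $G_\infty$ holds.

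The substance of the proof is the depth condition. Let $H_i=H_i(f_1,\dots,f_{N+1};R)$ be the Koszul homology of a minimal generating set. Since $\grade I=N$ and there are $N+1$ generators, $H_i=0$ for all $i\ge 2$, so only $H_0=R/I$ and $H_1$ are nonzero. Each $H_i$ is a module over the one-dimensional ring $R/I$, and the sliding depth inequalities $\depth H_i\ge\dim R-\mu(I)+i=i$ hold automatically for $i\ne 1$ (for $i\ge 2$ since $H_i=0$, and for $i=0$ since $\depth R/I=1$); the condition therefore reduces to the single inequality $\depth H_1\ge 1$. I would prove this by identifying $H_1$ with the canonical module $\omega_{R/I}$ up to a degree twist: as $R$ is Gorenstein and $I$ is a perfect ideal of grade $N$, Koszul self-duality together with the perfection of $I$ identifies the top homology $H_1=H_{(N+1)-N}$ with $\operatorname{Ext}^N_R(R/I,R)=\omega_{R/I}$ up to a shift, and $\omega_{R/I}$ is a maximal Cohen--Macaulay module over the one-dimensional ring $R/I$, hence has depth $1$.

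With $G_\infty$ and sliding depth established, the Herzog--Simis--Vasconcelos criterion yields $\Sym(I)\cong\rees(I)$, i.e.\ $I$ is of linear type. I expect the main obstacle to be the depth computation of the third step: the vanishing of the higher Koszul homology and the verification of $G_\infty$ are essentially formal, but showing $\depth H_1\ge 1$---equivalently, that $H_1$ carries no $\mathfrak m$-torsion, realized here as the canonical module of the Cohen--Macaulay ring $R/I$---is where the almost complete intersection and reducedness hypotheses genuinely enter.
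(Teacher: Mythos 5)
Your proof is correct, but it takes a genuinely different route from the paper's. The paper disposes of the lemma in two lines by citing \cite[Corollary 5.65]{refVa} --- an almost complete intersection that is a generic complete intersection is of linear type --- and then observing that reducedness makes $I_{\p_i}$ a complete intersection at each point $\p_i$. Your argument instead verifies the hypotheses of the general Herzog--Simis--Vasconcelos criterion (sliding depth plus $G_\infty$) from scratch. The $G_\infty$ half is essentially the same content as the paper's verification: for an almost complete intersection, $G_\infty$ at the minimal primes \emph{is} the generic complete intersection condition, and at primes of height $N+1$ the bound $\mu(I_\p)\le \mu(I)=N+1\le \h\p$ is automatic (note, though, that your list of primes containing $I$ should include the non-graded maximal ideals, not just $\mathfrak m$; the same trivial bound covers them). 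The genuinely new work in your version is the sliding depth half: the vanishing $H_i=0$ for $i\ge 2$, the identification $H_1\cong\operatorname{Ext}^N_R(R/I,R)\cong\omega_{R/I}$ (up to twist) via the standard duality for the last nonvanishing Koszul homology, and the observation that finite reduced point schemes are arithmetically Cohen--Macaulay so that $\omega_{R/I}$ is maximal Cohen--Macaulay of depth $1$. What this buys you is more than the lemma asks for: you have in effect shown that $I$ is strongly Cohen--Macaulay with $G_\infty$, which by the same circle of ideas also gives the Cohen--Macaulayness (indeed, in this case the complete intersection structure) of $\rees(I)$ recorded in Corollary \ref{CI}. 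The price is reliance on heavier machinery where the paper needs only a single packaged citation; also, the reduction from the local statement of the HSV criterion to the graded setting is routine but should be acknowledged. Neither point is a gap.
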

\begin{proof}
 It is shown in \cite[Corollary 5.65]{refVa} that an almost complete intersection $I$ of height $h$ that is a generic complete intersection (i.e. $I$ localized at each of its associated primes of codimension $h$  is a complete intersection),  is an ideal of linear type.
 All  these conditions are clearly satisfied in case $I$ defines a reduced set of points since $R/I$ is height 2 arithmetically Cohen-Macaulay, with $Ass(I)=\{I_{p_i}\ |  1\leq i\leq e(R/I)\}$ and  $I_{p_i}$ is minimally generated by $N$ linear forms that form a regular sequence for every ideal  defining one of the points $p_i$ in the given set.
\end{proof}

 \begin{cor}\label{CI}
 The Rees algebra of the Fermat ideal $I=(x(y^n-z^n),y(z^n-x^n),z(x^n-y^n))$is a complete intersection whose defining ideal is generated by two forms of bidegree $(n+3,1)$ and $(2n,1)$.
  \end{cor}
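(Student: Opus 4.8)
The plan is to reduce, via Lemma~\ref{lintype}, to computing the linear syzygies of $I$ and then to pin them down explicitly using the Hilbert--Burch theorem. Since $I$ defines a reduced set of points in $\pr{2}$, the quotient $R/I$ is arithmetically Cohen--Macaulay of codimension two, and $I$ is minimally generated by the three forms $f_1=x(y^n-z^n)$, $f_2=y(z^n-x^n)$, $f_3=z(x^n-y^n)$, each of degree $n+1$. By Hilbert--Burch the minimal free resolution of $R/I$ has the form
$$0 \to R(-a)\oplus R(-b) \xrightarrow{\ \phi\ } R(-(n+1))^3 \to R \to R/I \to 0,$$
where $\phi$ is a $3\times 2$ matrix whose maximal minors recover $f_1,f_2,f_3$ up to sign, and additivity of degrees in a $2\times 2$ minor forces $a+b=3(n+1)$. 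In particular the first syzygy module of $I$ is free of rank two, so the linear relations defining $\rees(I)$ are controlled by the two columns of $\phi$.

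The key step is to write down $\phi$. A direct computation produces the two syzygies
$$yz\,f_1 + zx\,f_2 + xy\,f_3 = xyz\bigl[(y^n-z^n)+(z^n-x^n)+(x^n-y^n)\bigr]=0$$
and
$$x^{n-1}f_1 + y^{n-1}f_2 + z^{n-1}f_3 = 0,$$
the second because the six resulting monomials $x^ny^n,\,x^nz^n,\,y^nz^n$ each occur once with either sign. These assemble into
$$\phi=\begin{pmatrix} yz & x^{n-1}\\ zx & y^{n-1}\\ xy & z^{n-1}\end{pmatrix},$$
and one checks that its three maximal minors are, up to sign, exactly $f_1,f_2,f_3$. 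Since these minors generate $I$, which has height two, the converse direction of Hilbert--Burch confirms that the displayed complex is a resolution; it is minimal because all entries of $\phi$ are nonunits for $n\ge 2$, so the two columns minimally generate the full syzygy module. Reading off degrees, the first column sits in degree $2+(n+1)=n+3$ and the second in degree $(n-1)+(n+1)=2n$, whence $(a,b)=(n+3,2n)$.

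Now I would invoke linear type. By Lemma~\ref{lintype}, $I$ is of linear type, so $L=L_1$, and the defining ideal of $\rees(I)$ is generated precisely by the two linear relations coming from $\phi$, namely $yz\,T_1+zx\,T_2+xy\,T_3$ and $x^{n-1}T_1+y^{n-1}T_2+z^{n-1}T_3$. Since each $T_i$ carries bidegree $(n+1,1)$, these forms have bidegree $(n+3,1)$ and $(2n,1)$, as claimed. For the complete-intersection assertion I would compare dimensions: $\dim\rees(I)=\dim R+1=4$ because $I$ has positive height in the domain $R$, while $\dim S=\dim R+3=6$, so $L$ has height $6-4=2$ in the Cohen--Macaulay ring $S$; an ideal of height two generated by two elements is a complete intersection. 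The only part requiring genuine insight is guessing the two syzygies; once they are written down, checking that their minors return the $f_i$ and that all degrees match is routine, and Hilbert--Burch rules out any further minimal relations.
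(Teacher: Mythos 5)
Your proof is correct and follows essentially the same route as the paper: reduce to the symmetric algebra via Lemma~\ref{lintype}, identify the two Hilbert--Burch syzygies of degrees $2$ and $n-1$, and conclude that the two resulting bidegree-$(*,1)$ relations cut out a height-two ideal. The only difference is that you exhibit the syzygy matrix $\phi$ explicitly and verify its minors, where the paper cites the proof of \cite[Theorem 2.1]{refDHNSST} for the syzygy degrees, and you get the height by the dimension count $\dim\rees(I)=\dim R+1$ rather than by the paper's remark that the two syzygies are algebraically independent; both are valid.
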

 \begin{proof}
By Lemma \ref{lintype}, the Rees algebra of a Fermat ideal is isomorphic to its symmetric algebra. Next we show the latter is a complete intersection. Let $A=\left[\begin{matrix}P_1 & P_2 & P_3 \\ Q_1 & Q_2 & Q_3 \end{matrix}\right]^T$ be a presentation matrix for the module of syzygies on $I$. By the proof of \cite[Theorem 2.1]{refDHNSST}, we have $\deg P_i=2$ and $\deg Q_i=n-1$. As quotients of the polynomial ring $S=R[T_1,T_2,T_3]$ the symmetric and Rees algebra of $I$ are then defined by 
 $$\rees(I)\simeq \Sym(I) \simeq S/(P_1T_1+P_2T_2+ P_3T_3, Q_1T_1+Q_2T_2+Q_3T_3).$$
Since the two syzygies of $I$ are algebraically independent, the height of this ideal is two. This yields the desired conclusion.
\end{proof}
 
 A prevailing technique (\cite{refKo}, \cite{refCHT}) used in investigating resolutions of the powers of $I$ relies on using the resolution of the Rees algebra. Consider the bihomogeneous minimal free resolution of $\rees(I)$:
$$ 0 \lra \bigoplus_{(i,j)} S(-i,-j)^{\beta_{p,(i,j)}} \lra \ldots \lra \bigoplus_{(i,j)} S(-i,-j)^{\beta_{1,(i,j)}} \lra S \lra \rees(I) \lra 0. $$
Note that $\rees(I)_{(*,r)}\simeq I^r$ as $R$-modules via the map $T_i\mapsto f_i$. Restricting to the strand of this resolution corresponding to the $R$-submodule of the resolvent $S$ above consisting of elements of bidegrees $(*,r)$  yieds a (not necessarily minimal) free resolution of $I^r$ over $R$ as follows:
$$ 0 \lra \bigoplus_{(i,j)} S(-i,-j)_{(*,r)}^{\beta_{p,(i,j)}} \lra \ldots \lra \bigoplus_{(i,j)} S(-i,-j)_{(*,r)}^{\beta_{1,(i,j)}} \lra S_{(*,r)} \ra I^r \ra 0. $$ 

\begin{thm}\label{res}
Let $I$ be a strict almost complete intersection ideal with minimal generators of the same degree $d$ defining a reduced set of points in $\pr{2}$. Assume that the module of syzygies on $I$ is generated in degrees $d_0$ and $d_1$. Then the minimal free resolution of $I^r$ over $R=\field[x,y,z]$ is 
$$0 \ra R(-(r+2)d)^{\binom {r}{2}} \ra\begin{matrix}R(-(r+1)d-d_1)^{\binom{r+1}{2}}\\
 \oplus \\
 R(-(r+1)d-d_0)^{\binom{r+1}{2}} 
 \end{matrix} \ra R(-rd)^{\binom{r+2}{2}} \ra I^r \ra 0.$$
\end{thm}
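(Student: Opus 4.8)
The plan is to obtain the minimal free resolution of $I^r$ by resolving the Rees algebra $\rees(I)$ over the bigraded ring $S=R[T_1,T_2,T_3]$ and then extracting the bidegree-$(*,r)$ strand, exactly as in the principle recalled in the display preceding the statement. The essential input is that $\rees(I)$ is a complete intersection. By Lemma \ref{lintype} the ideal $I$ has linear type, so $\rees(I)\simeq\Sym(I)=S/L_1$. Arguing as in the proof of Corollary \ref{CI}, I would first record that a reduced set of points in $\pr{2}$ is arithmetically Cohen--Macaulay of codimension two, so by the Hilbert--Burch theorem $R/I$ has a resolution $0\to R^2\to R^3\to R\to R/I\to 0$; hence $I$ has exactly two minimal syzygies and $L_1=(g_0,g_1)$, where $g_i=\sum_j b_{ij}T_j$ is the linear-in-$T$ form attached to the $i$-th syzygy. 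A dimension count, $\dim\rees(I)=\dim R+1=4$ against $\dim S=6$, shows $(g_0,g_1)$ has height two, so $g_0,g_1$ is a regular sequence. Reading degrees off Definition \ref{defRees}, the syzygy of total degree $d_i$ has coefficients $b_{ij}$ of degree $d_i-d$, so $g_i$ has bidegree $(d_i,1)$.

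With this structure, $\rees(I)$ is resolved by the Koszul complex on $g_0,g_1$,
$$0\lra S(-d_0-d_1,-2)\lra S(-d_0,-1)\oplus S(-d_1,-1)\lra S\lra\rees(I)\lra 0,$$
and I would then pass to the $(*,r)$ strand. The computational heart is the identity $S_{(*,s)}\simeq R(-sd)^{\binom{s+2}{2}}$ of graded $R$-modules, which holds because the $(*,s)$ part of $R[T_1,T_2,T_3]$ is $R$-free on the $\binom{s+2}{2}$ monomials $T^a$ with $|a|=s$, each of bidegree $(sd,s)$. Applying this term by term yields the three free modules of the resolution: the ranks are $\binom{r+2}{2}$, $\binom{r+1}{2}$ (twice), and $\binom{r}{2}$, while the twists are obtained by combining the twist of each Koszul term with the twist coming from $S_{(*,s)}$. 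Since taking a graded strand is an exact operation, this strand is automatically a free resolution of $\rees(I)_{(*,r)}\simeq I^r$; the Hilbert--Burch relation $d_0+d_1=3d$, reflecting that the generators of $I$ are the maximal minors of the syzygy matrix, is used to simplify the twist of the top term to a multiple of $d$.

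It remains to see that the strand so produced is already minimal, and this is where I expect the main care to be needed. The differentials of the strand are induced by multiplication by $g_0$ and $g_1$ together with the Koszul sign, so in the monomial bases above every matrix entry is one of the syzygy coefficients $b_{ij}$, a form of degree $d_i-d\ge 1$; indeed a syzygy among minimal generators of a single degree $d$ cannot have a nonzero scalar coefficient without violating minimal generation. Hence all entries lie in $\mathfrak m=(x,y,z)$, no free summand cancels, and the strand is the minimal free resolution of $I^r$. The two points requiring vigilance are therefore the bidegree bookkeeping of the twists, including the use of $d_0+d_1=3d$, and the verification that the induced differentials carry no unit entries, so that the resolution obtained is genuinely minimal rather than merely a resolution.
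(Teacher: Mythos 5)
Your proposal is correct and follows essentially the same route as the paper's proof: linear type (Lemma \ref{lintype}) identifies $\rees(I)$ with $\Sym(I)$, which is a complete intersection resolved by the Koszul complex on the two syzygy forms, and the $(*,r)$-strand yields the resolution, with minimality secured by noting that every induced matrix entry is a syzygy coefficient of positive degree. One bookkeeping caveat: you take $d_0,d_1$ to be the total degrees of the syzygies inside $R(-d)^3$ (so $d_0+d_1=3d$), whereas the paper takes them to be the degrees of the Hilbert--Burch matrix entries (so $d_0+d_1=d$, as in Corollary \ref{cor:res-ordinary} where $d=n+1$, $d_0=2$, $d_1=n-1$); in either convention the twists your computation produces agree with those in the paper's own concluding display and in Corollary \ref{cor:res-ordinary}, namely $0\to R(-(r+1)d)^{\binom{r}{2}}\to R(-rd-d_1)^{\binom{r+1}{2}}\oplus R(-rd-d_0)^{\binom{r+1}{2}}\to R(-rd)^{\binom{r+2}{2}}\to I^r\to 0$ in the paper's convention, rather than with the twists literally printed in the theorem statement, which are shifted by $d$ and appear to be a typographical slip.
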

\begin{proof}
As in Corollary \ref{CI},  $\rees(I)$ is a complete intersection generated in bidegrees $(d+d_0,1)$ and $(d+d_1,1)$.  Recall that the degree of the minimal syzygies in a Hilbert-Burch resolution are related by $d_0+d_1=d$. Resolving the complete intersection $\rees(I)$ over $S$ we obtain:
\begin{equation*}\label{reesres}
0 \lra S(-2d,-2) \lra S(-d-d_1,-1) \oplus S(-d-d_0,-1) \lra S \lra \rees(I) \lra 0.
\end{equation*}

Taking the strand of degree $(*,r)$ of this complex and keeping in mind the following identities
\begin{eqnarray*}
S_{(*,r)} &=& \bigoplus_{\sum a_i=r}(\prod_{j=1}^n T_j^{a_i})R(-rd)\simeq R(-rd)^{\binom{r+n-1}{r}} \mbox{ and}\\ S(-i,-j)_{(*,r)} &=& \bigoplus_{\sum a_i=r-j}(\prod_{j=1}^n T_j^{a_i})R(-rd-i)\simeq R(-rd)^{\binom{r+n-1-j}{r}},\\
\end{eqnarray*}
yields for 3-generated ideals $I$ a free resolution of $I^r$ over $R$ of the form 
\begin{equation*}
0 \ra R(-(r+1)d)^{\binom {r}{2}} {\ra} 
\begin{matrix}R(-rd-d_1)^{\binom{r+1}{2}}\\
 \oplus \\
 R(-rd-d_0)^{\binom{r+1}{2}} 
 \end{matrix}{\ra} R(-rd)^{\binom{r+2}{2}} \ra I^r \ra 0.
\end{equation*}

 Although this is not generally the case, the resolution above is in fact minimal as long as the $\binom{r+2}{2}$ obvious generators of $I^r$ form a minimal generating set, because the consecutive terms appear with distinct shifts, therefore there can be no cancellations. However, the fact that all $\binom{r+2}{2}$ obvious generators are needed to generate $I^r$ follows in the case where $I$ is of linear type from the fact that there are no elements of bidegree $(0,r)$ in the defining ideal of $\rees(I)$, which would be forced by this type of nonminimality.  Note also that the binomial coefficient ${\binom {r}{2}}$ is 0 if and only if $r=1$, thus $I$ is the only ordinary power that is a perfect ideal. 
\end{proof}

\begin{cor}\label{cor:res-ordinary}
The minimal free resolutions of the ordinary powers of the Fermat ideal $$I=(x(y^n-z^n),y(z^n-x^n),z(x^n-y^n))$$ \vspace{-0.5em}
are:
\begin{itemize}
\item if $r=1$
\[
\begin{small}
0 \ra
R(-2n) 
\oplus 
R(-n-3)
 \ra R(-n-1)^{3} \ra I \ra 0.
 \end{small}
 \]
\item if $r\geq 2$
\[
\begin{small}
0 \ra R(-(r+1)(n+1))^{\binom {r}{2}} \ra \begin{matrix}
R(-r(n+1)-n+1)^{\binom{r+1}{2}} \\
\oplus \\
R(-r(n+1)-2)^{\binom{r+1}{2}}\end{matrix}
 \ra R(-r(n+1))^{\binom{r+2}{2}} \ra I^r \ra 0.
 \end{small}
 \]
 \end{itemize}
 
 The Castelnuovo-Mumford regularity of the ordinary powers of Fermat ideals is given by
$$
\reg(I^r)=\begin{cases}
2n & \text{if } r=1\\
rn+r+n-1 & \text{if }   r\ge 2.
\end{cases}
$$

\end{cor}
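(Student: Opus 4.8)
The plan is to deduce everything by specializing the general results already established in this section, so that the argument reduces to pinning down the numerical data of the Fermat ideal and then reading invariants off the resolution. First I would record that the minimal generators all have degree $d = n+1$ and that, by the computation underlying Corollary \ref{CI} (which rests on \cite[Theorem 2.1]{refDHNSST}), the syzygy module is minimally generated in degrees $d_0 = 2$ and $d_1 = n-1$; these satisfy the Hilbert--Burch relation $d_0 + d_1 = d$. The degree-two syzygy is the evident one $(yz, xz, xy)$, arising from the identity $(y^n - z^n) + (z^n - x^n) + (x^n - y^n) = 0$, while the existence of the second syzygy (of entry degree $n-1$) is forced by Hilbert--Burch.

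With these values fixed, the resolution for $r \ge 2$ is obtained by substituting $d = n+1$, $d_0 = 2$, $d_1 = n-1$ directly into Theorem \ref{res}. For $r = 1$ the top free module occurs with multiplicity $\binom{1}{2} = 0$ and hence vanishes, so the resolution degenerates to the Hilbert--Burch resolution $0 \to R(-2n) \oplus R(-(n+3)) \to R(-(n+1))^3 \to I \to 0$. Minimality in each case is inherited from Theorem \ref{res}: consecutive free modules carry pairwise distinct twists, and because $I$ has linear type all $\binom{r+2}{2}$ of the listed generators of $I^r$ are genuinely required.

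For the regularity I would use $\reg(I^r) = \max\{\, a - i : R(-a) \text{ occurs in homological degree } i \,\}$. For $r \ge 2$ the candidates are $rn + r$ in degree $0$, the two values $rn + r + n - 2$ and $rn + r + 1$ in degree $1$, and $(r+1)(n+1) - 2 = rn + r + n - 1$ in degree $2$; since $n \ge 2$ the degree-two term dominates, giving $\reg(I^r) = rn + r + n - 1$. For $r = 1$ only the generators and the two syzygies contribute, and the regularity is the maximum of $n+1$, $n+2$, and $2n-1$, read straight off the Hilbert--Burch resolution.

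The technical weight of the statement lies entirely in Theorem \ref{res} and Corollary \ref{CI}, so no serious obstacle remains. The only points demanding care are the bookkeeping that confirms the $r=1$ resolution as the degenerate case $\binom{r}{2}=0$ of Theorem \ref{res}, and the short case analysis locating the term that governs the regularity --- which shifts from the last (Koszul-top) free module when $r \ge 2$ to the syzygy module when $r = 1$.
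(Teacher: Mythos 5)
Your approach is the same as the paper's: fix $d=n+1$, $d_0=2$, $d_1=n-1$ from \cite[Theorem 2.1]{refDHNSST}, specialize Theorem \ref{res}, and read the regularity off the graded shifts. One caution: you must substitute into the resolution as derived in the \emph{proof} of Theorem \ref{res}, whose twists are $-(r+1)d$ and $-rd-d_i$; the displayed statement of Theorem \ref{res} carries an extra $d$ in each twist, and taking it literally would put $R(-(r+2)(n+1))$ in the last spot rather than the $R(-(r+1)(n+1))$ appearing in the corollary. Your regularity bookkeeping shows you are in fact using the correct twists, so this is only a matter of saying so explicitly. Treating $r=1$ as the degenerate case $\binom{1}{2}=0$ is a harmless variant of the paper's direct citation of \cite{refDHNSST} for that case.

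The genuine problem is the $r=1$ regularity. You correctly reduce it to $\max\{n+1,\,n+2,\,2n-1\}$ but stop without evaluating the maximum or comparing it to the asserted value $2n$. For $n\ge 3$ that maximum is $2n-1$, and for $n=2$ it is $n+2=2n$; it never equals $2n$ when $n\ge 3$. So your computation, carried to its conclusion, contradicts the displayed formula: from $0\to R(-2n)\oplus R(-n-3)\to R(-n-1)^3\to I\to 0$ one gets $\reg(I)=2n-1$ for $n\ge 3$ (for $n=3$ the resolution is $0\to R(-6)^2\to R(-4)^3\to I\to 0$, giving $\reg(I)=5$, not $6$). The value $2n$ in the corollary appears to be an off-by-one slip, and a complete write-up must either correct the stated value or explain why your computation should be overridden; as it stands your argument does not (and cannot) establish $\reg(I)=2n$. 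The $r\ge 2$ regularity is fine: your four candidates are correct and their maximum is $rn+r+n-1$ for all $n\ge 2$.
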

\begin{proof}
The minimal free resolution of $I$ (the case $r=1$) can be found in the proof of \cite[Theorem 2.1]{refDHNSST}. The minimal free resolutions for the higher powers ($r\geq 2$) follow by setting $d=n+1,d_0=2, d_1=n-1$ in Theorem \ref{res}. The graded shifts in these resolutions justify the regularity.
\end{proof}


\section{Symbolic Powers of Fermat ideals}\label{sect3}

We now establish properties of symbolic powers of Fermat ideals. This includes a description of their minimal generators and their graded minimal free resolutions. In order to achieve this we need to study ideals of a larger class of fat points, all supported on Fermat configurations. 

\subsection{Resolutions of symbolic powers}
Recall that  $I=(x(y^n-z^n),y(z^n-x^n),z(x^n-y^n))$  is the ideal of a Fermat configuration of $n^2+3$ (reduced) points in $\pr{2}$. By the classical Nagata-Zariski theorem \cite[Theorem 3.14]{refEis}, the $m$-th symbolic power of $I$, $I^{(m)}$ is the set of homogeneous polynomials that vanish to order at least $m$ at every point in the zero locus of $I$. Algebraically, since $I$ can be written as 
\[
I=(x^n-y^n,y^n-z^n)\cap(x,y)\cap(y,z)\cap(z,x)
\]
and each of the ideals listed in this decomposition of $I$ is generated by a regular sequence (such ideals have their symbolic powers equal to their respective ordinary powers) it follows that
\begin{equation} 
    \label{eq:decomp I}
 I^{(m)}=(x^n-y^n,y^n-z^n)^m\cap(x,y)^m\cap(y,z)^m\cap(z,x)^m.
\end{equation}

Although this description of the symbolic powers has the advantage of being concise, it is not best suited for studying the fine relationship between various symbolic powers. The approach we take in this section is to  exhibit explicit minimal generators and minimal free resolutions for some of the symbolic powers of $I$. Since the symbolic powers are prefect ideals of height two, this is equivalent to describing a Hilbert-Burch matrix  corresponding to each of these ideals. We build these Hilbert-Burch matrices as block matrices with some of the blocks of the form indicated below.

\begin{defn}\label{def:blocks}
For integers $0\leq j \leq  t$ and elements $a,b$ of a commutative ring $R$, we define the following matrices and column vectors:
\begin{itemize}
\item $H(a,b)_t=\left[\begin{matrix}-b & 0 & \ldots &  0 \\ a & -b & \ldots &  0\\ \vdots &\vdots &&  \vdots\\  0 & 0 & \ldots  & -b\\ 0 & 0 & \ldots  & a \end{matrix}\right]\in\mathcal{M}_{(t+1)\times t}( R )$
\item $C(a,b)_t=\left[\begin{matrix} a & -b & 0 & \ldots &  0 \\ 0 & a & -b & \ldots &  0\\ \vdots &\vdots &&  \vdots\\ 0 &  0 & 0 & \ldots  & -b\\ -b & 0 & 0 & \ldots  & a \end{matrix}\right]\in\mathcal{M}_{t\times t}( R )$
\item $E_j \in \ZZ^{j+1}$  is the transpose of the row vector $\begin{bmatrix}
\binom{j}{0} &
\cdots & 
\binom{j}{i} &
\cdots &
\binom{j}{j}
\end{bmatrix}$,
\item $e_j$ is the $j$-th standard basis vector of $\ZZ^{t+1}$
\end{itemize}
\end{defn}

\begin{lem}\label{HC}
With the notation  of Definition \ref{def:blocks}, the following statements hold true:
\begin{enumerate}
\item $\det C(a,b)_t =a^t-b^t$, if $t \ge 2$. 
\item The ideal of maximal minors of $H(a,b)_t$ is $I_t(H(a,b)_t)=(a,b)^{t}$.
\item If $(a,b)$ is an ideal of height two, then the minimal free resolution of  $R/(a,b)^{t}$ is $$0\to R^{t}\stackrel{H(a,b)_t}{\longrightarrow } R^{t+1} \to R \to R/(a,b)^{t}\to 0.$$
\end{enumerate}
\end{lem}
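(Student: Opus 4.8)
The plan is to dispatch the three parts in order, since part (3) rests on part (2), and all three reduce to matrix computations capped by a single invocation of the Hilbert--Burch theorem.

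For part (1), I would evaluate $\det C(a,b)_t$ by Laplace expansion along the first column, which has exactly two nonzero entries: $a$ in position $(1,1)$ and $-b$ in the corner $(t,1)$. Deleting row $1$ and column $1$ leaves an upper-triangular matrix with $a$'s on the diagonal, contributing $a\cdot a^{t-1}$; deleting row $t$ and column $1$ leaves a lower-triangular matrix with $-b$'s on the diagonal, contributing $(-1)^{t+1}(-b)(-b)^{t-1}$. Since $(-1)^{t+1}(-b)^t=(-1)^{2t+1}b^t=-b^t$, these combine to $a^t-b^t$, as claimed. The hypothesis $t\ge 2$ is precisely what makes $C(a,b)_t$ well defined, since for $t=1$ the single entry would be asked to be both $a$ (diagonal) and $-b$ (corner); for $t\ge 2$ the two designated entries occupy distinct positions and the first column genuinely carries both of them.

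For part (2), the structural observation to exploit is that $H(a,b)_t$ is lower bidiagonal, with $-b$ on the diagonal and $a$ on the subdiagonal. Consequently, deleting its $k$-th row produces a $t\times t$ matrix that is \emph{block diagonal}: rows $1,\dots,k-1$ have their nonzero entries confined to columns $1,\dots,k-1$, while rows $k+1,\dots,t+1$ have theirs confined to columns $k,\dots,t$. The top-left $(k-1)\times(k-1)$ block is lower triangular with determinant $(-b)^{k-1}$, and the bottom-right $(t-k+1)\times(t-k+1)$ block is upper triangular with determinant $a^{t-k+1}$. Hence the $k$-th maximal minor equals $\pm\,a^{t-k+1}b^{k-1}$, and as $k$ runs from $1$ to $t+1$ these are, up to sign, exactly the degree-$t$ monomials $a^{t-k+1}b^{k-1}$ that minimally generate $(a,b)^t$. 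Since signs do not affect the ideal generated, $I_t(H(a,b)_t)=(a,b)^t$ follows.

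For part (3) I would invoke the Hilbert--Burch theorem directly. By part (2) the map $R^{t+1}\to R$ given by the signed maximal minors of $H(a,b)_t$ has image $I_t(H(a,b)_t)=(a,b)^t$. Because $(a,b)$ has height two and in the Cohen--Macaulay setting relevant here (in particular $R=\field[x,y,z]$) height equals grade, the generators $a,b$ form a regular sequence, and since grade depends only on the radical with $\sqrt{(a,b)^t}=\sqrt{(a,b)}$ we obtain $\grade\,(a,b)^t=2$. The Hilbert--Burch exactness criterion (equivalently, the Buchsbaum--Eisenbud acyclicity criterion applied to a length-two complex with grade-two maximal-minor ideal) then guarantees that $0\to R^t\xrightarrow{H(a,b)_t}R^{t+1}\to R$ is exact with cokernel $R/(a,b)^t$, which is the displayed resolution. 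Minimality is immediate: every entry of both maps---the $\pm a,\pm b$ appearing in $H(a,b)_t$ and the degree-$t$ minors $\pm a^{i}b^{j}$ appearing in the Hilbert--Burch map---is a non-unit (a positive-degree form in the graded situation), so no entry can cancel a free summand. I expect the only genuine obstacle to lie in part (2), namely correctly recognizing the block-diagonal splitting of each deleted-row submatrix and tracking the exponents of $a$ and $b$; once that is in hand, part (3) is a routine application of a standard structure theorem and part (1) is a two-term cofactor expansion.
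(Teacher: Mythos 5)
Your proposal is correct and follows essentially the same route as the paper: a Laplace/cofactor expansion to compute $\det C(a,b)_t$ and to identify the maximal minors of $H(a,b)_t$ as the monomials generating $(a,b)^t$, followed by an application of the Hilbert--Burch theorem for part (3). You simply supply the details (the two-term expansion along the first column, the block-triangular structure of each deleted-row submatrix, and the grade check needed for Hilbert--Burch) that the paper leaves to the reader.
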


\begin{proof}
Applying Laplace expansion, it is easy to see that $\det C(a,b)_t=a^t+(-1)^{t+1}(-b)^t$ and that the  maximal minors of $H(a,b)_t$ generate $(a,b)^{t}$. Part $($c$)$ follows from $($b$)$ by the Hilbert-Burch theorem.
\end{proof}

We need another preparatory observation. 

\begin{lem}\label{3mat}
For any integer $n>0$, set $f=y^n-z^n, g=z^n-x^n, h=x^n-y^n \in R=\field[x,y,z]$. Fix an integer $ t > 0$ and consider the matrices of $\mathcal{M}_{(t+1)\times (t+1)}( R )$ given below, whose leftmost $t$ columns form $H(f,g)_{t}$. Then one has the determinantal formulas:

 \begin{enumerate}
 \item $\det \begin{bmatrix} 
 H (f,g)_t  & e_j
 \end{bmatrix} = (-1)^t f^{t-j+1} g^{j-1}$, for $1 \le j \le t+1$. \\[-.5em]
 
 \item $\det \begin{bmatrix} 
  & 0 \\
 H (f,g)_t  & \\
  & E_j
 \end{bmatrix} = (-1)^{t-j} g^{t-j} h^j$, for $0 \le j \le t$.  \\  \\

  \item $\det \begin{bmatrix} 
  & E_j \\
 H (f,g)_t  & \\
  & 0
 \end{bmatrix} = (-1)^{t-j}  f^{t-j} h^j$, for $0 \le j \le t$.  
\end{enumerate}
\end{lem}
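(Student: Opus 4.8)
The plan is to prove part (1) directly by a single cofactor expansion, reducing it to the determinant of a bidiagonal matrix, and then to obtain parts (2) and (3) as formal consequences by exploiting the multilinearity of the determinant in the appended column together with the binomial structure of $E_j$ and the relation $f+g+h=0$ among the three forms.

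\textbf{Part (1).} I would expand $\det \begin{bmatrix} H(f,g)_t & e_j \end{bmatrix}$ along its last column. Since $e_j$ has its only nonzero entry (equal to $1$) in row $j$, exactly one term survives:
$$\det \begin{bmatrix} H(f,g)_t & e_j \end{bmatrix} = (-1)^{j+(t+1)}\det M_j,$$
where $M_j$ is the $t\times t$ matrix obtained from $H(f,g)_t$ by deleting its $j$-th row. The crucial observation is that $M_j$ is \emph{block diagonal}: rows $1,\ldots,j-1$ of $H(f,g)_t$ occupy only columns $1,\ldots,j-1$, while rows $j+1,\ldots,t+1$ occupy only columns $j,\ldots,t$, so deleting row $j$ produces a clean gap and both off-diagonal blocks vanish. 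The upper-left block is a $(j-1)\times(j-1)$ lower bidiagonal matrix with $-g$ on the diagonal, of determinant $(-g)^{j-1}$, and the lower-right block is a $(t-j+1)\times(t-j+1)$ upper bidiagonal matrix with $f$ on the diagonal, of determinant $f^{t-j+1}$. Collecting the signs gives $(-1)^{j+t+1}(-1)^{j-1}f^{t-j+1}g^{j-1}=(-1)^{t}f^{t-j+1}g^{j-1}$, as claimed (the degenerate cases $j=1$ and $j=t+1$, where one block is empty, agree with the formula). I expect the only delicate point to be verifying that the structure is genuinely block diagonal rather than merely block triangular.

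\textbf{Parts (2) and (3).} I would then read the appended column as a $\ZZ$-linear combination of the standard basis vectors and use linearity of the determinant in that column. In part (3) the last column equals $\sum_{i=0}^{j}\binom{j}{i}e_{i+1}$, so part (1) yields
$$\sum_{i=0}^{j}\binom{j}{i}\det\begin{bmatrix}H(f,g)_t & e_{i+1}\end{bmatrix} = (-1)^t\sum_{i=0}^{j}\binom{j}{i}f^{t-i}g^{i} = (-1)^t f^{t-j}(f+g)^{j}.$$
Since $f+g=-h$, we have $(f+g)^j=(-1)^j h^j$, and because $(-1)^{t+j}=(-1)^{t-j}$ this equals $(-1)^{t-j}f^{t-j}h^j$. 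The computation for part (2) is identical except that the binomial coefficients occupy the bottom $j+1$ rows, so the surviving basis vectors are $e_{t-j+1},\ldots,e_{t+1}$; applying part (1) to each now factors out $g^{t-j}$ rather than $f^{t-j}$, and the same binomial collapse $\sum_i\binom{j}{i}f^{j-i}g^i=(f+g)^j$ produces $(-1)^{t-j}g^{t-j}h^j$.

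The substantive work lies entirely in part (1); parts (2) and (3) are then routine. The single identity that must be in place beforehand is $f+g+h=0$, which holds since $(y^n-z^n)+(z^n-x^n)+(x^n-y^n)=0$, and which is precisely what converts the binomial sums into pure powers of $h$.
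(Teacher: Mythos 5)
Your proof is correct and follows essentially the same route as the paper's: expansion along the appended column for part (1), then linearity in that column together with part (1), the binomial formula, and the identity $f+g=-h$ for parts (2) and (3). You have simply written out in full the details that the paper leaves to the reader, including the block-diagonal structure and sign bookkeeping, all of which check out.
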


\begin{proof}
All statements follow by expanding along the last column. For statements (2) and (3), one  uses part (1), the binomial formula and the identity $f+g=-h$.
\end{proof}

In the following we provide an explicit description of a set of minimal generators as well as the Betti numbers of the symbolic powers $I^{(nk)}$, where $I=(x(y^n-z^n),y(z^n-x^n),z(x^n-y^n))$  is the ideal of a Fermat configuration and  $n\geq 3$,  $k\geq 1$ are arbitrary integers. Our proof works inductively.  We begin by establishing the initial cases.

\begin{lem}
\label{lem:matrix X3}
Fix integers $n\geq3$ and $k \ge 1$ and set $f=y^n-z^n, g=z^n-x^n, h=x^n-y^n \in R=\field[x,y,z]$. Consider the block matrix $\mathbf{X}_{3} \in \mathcal{M}_{(k(n-3)+3n+1)\times(k(n-3)+3n)}( R )$ given by
$$
\mathbf{X}_{3}=\left[\begin{matrix}
H(f,g)_{k(n-3)} & U &  V & W \\ 
0 & C(x,y)_n & 0 & 0  \\ 
0 & 0& C(y,z)_n  & 0\\ 
0 & 0  & 0 & C(z,x)_n  \\ 
 \end{matrix}\right],$$
where all entries in columns 2 to $n$ of the $(k(n-3) +1)\times n$  matrices $U, V$ and $W$ are zero and the first columns of  $U, V$ and $W$ are defined as follows: 
\begin{itemize}

\item The first column of $U$ is $(-1)^{k (n-3)} x f  e_{n-2}$. 

\item The bottom $n-2$ entries of the first column of $V$ form the vector $(-1)^{(k-1)(n-3)} y g  E_{n-3}$, all other entries in this column are zero. 

\item The top  $(k-1)(n-3) +1$ entries of the first column of $W$ form the vector $(-1)^{n-3}z h  E_{(k-1) (n-3)}$, all other entries in this column are zero. 

\end{itemize}
 Then the following statements hold true:
\begin{enumerate}
\item The ideal of maximal minors of $\mathbf{X}_{3}$ is 
\begin{equation*}
        \begin{split}
I(\mathbf{X}_{3}) =  & (fgh)(f,g)^{k(n-3)} 
 + f^{(k-1)(n-3) +2} g^{n-2}x(x,y)^{n-1} \\
 & + g^{(k-1)(n-3)+2} h^{n-2}y(y,z)^{n-1}  +f^{n-2}h^{(k-1)(n-3)+2} z(z,x)^{n-1}.
\end{split}
\end{equation*}

\item  The minimal free resolution of  the cyclic module defined by the ideal above is 
\renewcommand\arraystretch{1}
 $$0\to
  \begin{array}{c}
R(-n [k(n-3) +4])^{k(n-3)}\\
\oplus\\
R(-n [k(n-3) +4]- 1)^{3n} \\
\end{array}
\stackrel{\mathbf{X}_{3}}{\longrightarrow } 
  \begin{array}{c}
R(-n [k(n-3) +3])^{k(n-3)+1}\\
\oplus\\
R(-n [k(n-3) +4])^{3n} \\
\end{array}
 \to R \to R/I(\mathbf{X}_{3})\to 0.
 $$
 
 \item $$I(\mathbf{X}_{3}) =  (f,g)^{k(n-3)+3} \cap (x, y)^n \cap (y,z)^n \cap (x,z)^n.$$
 
\end{enumerate}
\end{lem}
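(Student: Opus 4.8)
The plan is to establish the two inclusions separately. Write $J$ for the right-hand side $(f,g)^{k(n-3)+3}\cap(x,y)^n\cap(y,z)^n\cap(x,z)^n$, and record the relations $h=-(f+g)$ together with $f=y^n-z^n$, $g=z^n-x^n$, $h=x^n-y^n$, which give $h\in(f,g)$ as well as $f\in(y,z)^n$, $g\in(z,x)^n$ and $h\in(x,y)^n$.

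For the inclusion $I(\mathbf{X}_3)\subseteq J$ I would simply check that each of the four families of generators displayed in part (1) lies in all four intersectands; this is bookkeeping. For instance $fgh\in(f,g)^3$ (and $fgh$ lies in each coordinate power via $h$, $f$, $g$ respectively), so the first family $(fgh)(f,g)^{k(n-3)}$ lands in every factor. For the generator $f^{(k-1)(n-3)+2}g^{n-2}x(x,y)^{n-1}$ the exponent identity $(k-1)(n-3)+2+(n-2)=k(n-3)+3$ places it in $(f,g)^{k(n-3)+3}$, while $x(x,y)^{n-1}\subseteq(x,y)^n$, $f\in(y,z)^n$, and $g^{n-2}\in(z,x)^n$ (using $n-2\ge1$) handle the coordinate powers; the remaining two families are symmetric.

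For the reverse inclusion I would argue by multiplicity rather than by exhibiting generators. By part (2) the module $R/I(\mathbf{X}_3)$ has a length-two free resolution, so $\pd R/I(\mathbf{X}_3)=2$ and Auslander--Buchsbaum gives $\depth R/I(\mathbf{X}_3)=1=\dim R/I(\mathbf{X}_3)$; hence $R/I(\mathbf{X}_3)$ is Cohen--Macaulay and has no nonzero submodule of finite length. Meanwhile $J$ is an intersection of symbolic powers at the disjoint supports consisting of the $n^2$ points cut out by the complete intersection $(f,g)$, taken with multiplicity $k(n-3)+3$, and the three coordinate points, taken with multiplicity $n$. Since ordinary and symbolic powers agree for the complete intersection $(f,g)$ and a fat point $mp$ in $\pr2$ has degree $\binom{m+1}{2}$, one gets $\deg(R/J)=n^2\binom{k(n-3)+4}{2}+3\binom{n+1}{2}$.

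Finally I would read the multiplicity of $R/I(\mathbf{X}_3)$ off part (2): with $M=k(n-3)$ the Hilbert-series numerator is $1-(M+1)t^{n(M+3)}+(M-3n)t^{n(M+4)}+3n\,t^{n(M+4)+1}$, and one half of its second derivative at $t=1$ gives $\deg(R/I(\mathbf{X}_3))=\tfrac12\big(n^2(M^2+7M+15)+3n\big)$, which coincides with $\deg(R/J)$. From $I(\mathbf{X}_3)\subseteq J$ the short exact sequence $0\to J/I(\mathbf{X}_3)\to R/I(\mathbf{X}_3)\to R/J\to0$ then forces $J/I(\mathbf{X}_3)$ to have dimension zero (equal multiplicities of the two one-dimensional outer terms), and as a finite-length submodule of the positive-depth module $R/I(\mathbf{X}_3)$ it must vanish, yielding $I(\mathbf{X}_3)=J$. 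The only real obstacle is the degree bookkeeping, namely verifying that the multiplicity extracted from the resolution agrees with the sum of the fat-point degrees; everything else is the standard principle that two unmixed ideals of the same dimension, one contained in the other and of equal multiplicity, must coincide.
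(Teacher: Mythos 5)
Your argument addresses only part (3) of the lemma, and it does so by invoking parts (1) and (2) as known facts: the containment $I(\mathbf{X}_3)\subseteq J$ uses the explicit generating set from (1), and the multiplicity of $R/I(\mathbf{X}_3)$ is read off the resolution from (2). Neither of these is proved in your proposal, and they are the technical core of the statement. Part (1) requires actually computing the ideal of maximal minors of the block matrix: one must expand the minors obtained by deleting a row from each block, using the block upper-triangular structure together with $\det C(a,b)_t=a^t-b^t$, $I_t(H(a,b)_t)=(a,b)^t$, and the determinantal identities of Lemma \ref{3mat} for the matrices bordered by the columns of $U$, $V$, $W$ (and, for the $V$ and $W$ blocks, a row/column permutation to restore upper-triangularity). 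Part (2) requires exhibiting a regular sequence of length two inside $I(\mathbf{X}_3)$ --- the paper uses $f^{k(n-3)+1}gh$ and $f^{(k-1)(n-3)+2}g^{n-2}x^n+g^{(k-1)(n-3)+2}h^{n-2}y^n+f^{n-2}h^{(k-1)(n-3)+2}z^n$ --- before Hilbert--Burch can be applied to conclude that $\mathbf{X}_3$ is a Hilbert--Burch matrix and that the displayed complex is the minimal free resolution. Without (1) and (2) your proof of (3) is circular relative to the full statement.

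For part (3) itself, your route is essentially the paper's: prove $I(\mathbf{X}_3)\subseteq J$ generator by generator using $f\in(y,z)^n$, $g\in(x,z)^n$, $h\in(x,y)^n$, then conclude equality from unmixedness plus equality of multiplicities. The only difference is cosmetic: you extract $e(R/I(\mathbf{X}_3))=\tfrac12\bigl(n^2(M^2+7M+15)+3n\bigr)$ (with $M=k(n-3)$) as half the second derivative at $t=1$ of the Hilbert-series numerator, whereas the paper evaluates the Hilbert function at $\reg+\pd-2$; both give $n^2\binom{k(n-3)+4}{2}+3\binom{n+1}{2}$, matching the fat-point degree of $R/J$, and your closing argument (a finite-length submodule of a depth-one Cohen--Macaulay quotient must vanish) is the same unmixedness principle the paper uses. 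So the part you did write is correct and faithful to the paper; the gap is that roughly two thirds of the lemma is assumed rather than proved.
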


\begin{rem}
   \label{rem:n-th symb power}
(i) For $n=3$, we interpret $H(f,g)_{k(n-3)}$ as an empty matrix. So in this case the first column of $U$ is part of the first column of $\mathbf{X}_{3}$. 

(ii) If $k=1$, then Equation \eqref{eq:decomp I} implies that 
\[
I(\mathbf{X}_{3}) = (fgh)(f,g)^{n-3}+f^2 g^{n-2} x(x,y)^{n-1}+ g^2 h^{n-2} y(y,z)^{n-1}+f^{n-2}h^2z(z,x)^{n-1}
\] 
is the $n$-th symbolic power of $I=(x(y^n-z^n),y(z^n-x^n),z(x^n-y^n))$. 
\end{rem}

\begin{proof}[Proof of Lemma \ref{lem:matrix X3}]
$(1)$ We start by examining the maximal minors of  $\mathbf{X}_{3}$ resulting from discarding one of the first $k(n-3) + 1$ rows. By properties of block upper-triangular matrices, such a minor is the product of four determinants: the minor of $H(f,g)_{n-3}$ corresponding to the deleted row, $\det(C(x,y)_n)$, $\det(C(y,z)_n)$ and $\det(C(z,x)_n)$. Using the formulas in Lemma \ref{HC}, it is clear that these minors generate the ideal $(fgh)(f,g)^{(n-3)}$.

To analyze the maximal minors of  $\mathbf{X}_{3}$ resulting from discarding one of the next $n$ rows note that deleting one row of $C(x,y)_n$ leaves a block upper-triangular matrix with three diagonal blocks consisting of: the first $k(n-3) + n$ rows and columns (corresponding to the blocks $H(f,g)_{n-3}$, $C(x,y)_n$) and the blocks  $C(y,z)_n$ and $C(z,x)_n$. The determinant of the latter two blocks are $f,g$, while  for the first block one gets the product of a minor of $H(x,y)_{n-1}$ and the determinant of the matrix formed by $H(f,g)_{k(n-3)}$ and the first column of $U$. This latter determinant is $f^{(k-1)(n-3) +1} g^{n-3}x$ by Lemma \ref{3mat}. Hence, Lemma \ref{HC} shows that these minors generate the ideal $ f^{(k-1)(n-3) +2} g^{n-2}x(x,y)^{n-1}$. 

For analyzing the maximal minors of  $\mathbf{X}_{3}$ resulting from discarding one of the next $n$ rows corresponding to the $C(y,z)_n$ block, we permute rows and columns of $\mathbf{X}_{3}$ to obtain a matrix 
$$
\mathbf{X}_{3}' = 
\left[\begin{matrix}
H(f,g)_{k(n-3)} & V &  U & W \\ 
0 & C(y,z)_n & 0 & 0  \\ 
0 & 0& C(x,y)_n  & 0\\ 
0 & 0  & 0 & C(z,x)_n  \\ 
 \end{matrix}\right].
 $$ 
Thus to find  the maximal minors of  $\mathbf{X}_{3}$ resulting from discarding one of the  rows corresponding to the $C(y,z)_n$ block, it suffices to analyze the corresponding minors of $\mathbf{X}_{3}'$ above. Arguing as in the case of deleting a row of $\mathbf{X}_{3}$ corresponding to the $C(x,y)_n$ block, we see that the maximal minors of  $\mathbf{X}_{3}$ resulting from discarding one of the  rows corresponding to the $C(y,z)_n$ block generate the ideal $g^{(k-1)(n-3)+2} h^{n-2}y(y,z)^{n-1}$.


A similar argument  yields that the minors corresponding to deleting one of the last $n$ rows of  $\mathbf{X}_{3}$ generate the ideal $f^{n-2}h^{(k-1)(n-3)+2} z(z,x)^{n-1}$.  Details are left to the reader.

$(2)$ By $(1)$, the ideal $I (\mathbf{X}_{3})$ contains the polynomials $f^{k(n-3)+1} gh$ and  $f^{(k-1)(n-3) +2} g^{n-2} x^n+g^{(k-1)(n-3) +2} h^{n-2} y^n+f^{n-2}h^{(k-1)(n-3) +2} z^n$. Since none of the (linear) divisors of $f, g$, and $h$ divides the latter polynomial, the two stated polynomials form a regular sequence of length two inside $I (\mathbf{X}_{3})$. Hence, an application of the Hilbert-Burch theorem gives the stated minimal resolution.

$(3)$ Set 
\[
J =  (f,g)^{k(n-3)+3} \cap (x, y)^n \cap (y,z)^n \cap (x,z)^n. 
\]
Note that $f \in (y,z)^n$, \ $g \in (x,z)^n$, and $h \in (x,y)^n$. Thus, using the set of generators of $I (\mathbf{X}_{3})$ given in $(1)$ one sees that $I (\mathbf{X}_{3}) \subseteq J$. In order to establish equality, it is sufficient to show that the ideals on both sides  are unmixed  and have  the same multiplicity.  The unmixedness of $J$ follows from its definition. The ideal $I (\mathbf{X}_{3})$ is unmixed as well because $R/I (\mathbf{X}_{3})$ is Cohen-Macaulay by $(2)$. 

It remains to compare the multiplicities. By \cite[Theorem 4.2 (2)]{refEis2}, we may compute the multiplicity of $R/I (\mathbf{X}_{3})$ as 
\begin{eqnarray*}
e( R/I (\mathbf{X}_{3}) & = & H_{R/I (\mathbf{X}_{3})} (\reg ( R/I (\mathbf{X}_{3})+{\rm pd}( R/I (\mathbf{X}_{3}) -2)\\
& = &  H_{R/I (\mathbf{X}_{3})} (n [k (n-3) + 4] -1), 
\end{eqnarray*}
where $H_M (j) = \dim_K [M]_j$ denotes the Hilbert function of a graded module $M$ in degree $j$ and we used the resolution given in $(2)$ to compute the regularity of $R/I (\mathbf{X}_{3})$. Taking this resolution into account again, the above  formula can be evaluated as follows:
\begin{eqnarray*}
e( R/I (\mathbf{X}_{3}) )&=&H_{R/I (\mathbf{X}_{3})} (n [k (n-3) + 4] -1)\\
&=&H_R(n [k (n-3) + 4] -1)- [k (n-3) + 1] \cdot  H_R(n-1)\\
&=& n^2 \binom{k (n-3) + 4}{2} +3 \binom{n+1}{2}.  
\end{eqnarray*}

We now determine the multiplicity of $R/J$. By the linearity formula, where $\p_i$ are the ideals of the $n^2$ points of the scheme defined by $(f, g)$, one has
\[
e(R/(f, g)^{k(n-3)+3})=\sum_{i=1}^{n^{2}} e(R/\p_i)e(R_{\p_i}/\p_{k(n-3)+3}^{n}R_{\p_i})= n^2 \binom{k(n-3)+4}{2}.
\]
It follows that 
\[
e (R/J) = n^2 \binom{k(n-3)+4}{2} + 3 \binom{n+1}{2}. 
\]
We conclude that $e (R/J) = e( R/I (\mathbf{X}_{3}))$, and thus $I (\mathbf{X}_{3}) = J$, as desired. 
\end{proof}

Now we extend the above results to higher symbolic powers. 

\begin{thm}
    \label{thm:gens symb powers}
Let $n \ge 3$ and $k \ge 1$ be integers  and consider the ideal $I = (x f, y g, z h)$ of the Fermat configuration, where  $f=y^n-z^n, g=z^n-x^n, h=x^n-y^n \in R=\field[x,y,z]$. Then the $k n$-th symbolic power of $I$ has the following set of minimal generators 
\begin{equation*}
\begin{split}
I^{(k n)} = & (fgh)^k \cdot (f,g)^{(n-3)k} \\
&+ \sum_{i=1}^{k} f^{(k-i)(n-2) + 2i} g^{k + i (n-3)} h^{k-i} x^{(i-1)n+1} \cdot (x,y)^{n-1}\\
 &+ \sum_{i=1}^{k} f^{k-i} g^{(k-i)(n-2) + 2i} h^{k + i (n-3)}   y^{(i-1)n+1} \cdot (y,z)^{n-1}\\
 & +  \sum_{i=1}^{k} f^{k + i (n-3)} g^{k-i} h^{(k-i)(n-2) + 2i}  z^{(i-1)n+1} \cdot (z,x)^{n-1}. 
\end{split}
\end{equation*}
\end{thm}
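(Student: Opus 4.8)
The plan is to realize the claimed ideal as the ideal of maximal minors of an explicit Hilbert--Burch matrix and then to identify it with the symbolic power by a multiplicity comparison, mirroring the treatment of the base case in Lemma~\ref{lem:matrix X3}. Write $J$ for the ideal generated by the four displayed families. Using \eqref{eq:decomp I} together with the identity $(x^n-y^n,y^n-z^n)=(f,g)$ (which holds because $f+g+h=0$ forces $(f,g)=(f,h)=(g,h)$), the target is the decomposition
\[
I^{(kn)}=(f,g)^{kn}\cap (x,y)^{kn}\cap (y,z)^{kn}\cap(z,x)^{kn}.
\]
The first, routine, half is to verify $J\subseteq I^{(kn)}$ by checking each generator against the four intersectands, using only $f\in(y,z)^n$, $g\in(z,x)^n$, $h\in(x,y)^n$, and $h=-(f+g)\in(f,g)$. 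For the first family, $(fgh)^k$ already lies in $(f,g)^{3k}$ while $h^k,f^k,g^k$ supply the required order at the three coordinate points. For the $i$-th summand of the second family the total $f,g,h$-degree is exactly $kn$, so membership in $(f,g)^{kn}$ is immediate, and a short computation shows $h^{k-i}x^{(i-1)n+1}(x,y)^{n-1}\subseteq(x,y)^{kn}$, $f^{(k-i)(n-2)+2i}\subseteq(y,z)^{kn}$, and $g^{k+i(n-3)}\subseteq(z,x)^{kn}$, since both exponents $(k-i)(n-2)+2i$ and $k+i(n-3)$ are at least $k$.

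For the reverse inclusion I would argue by induction on $k$, constructing a block matrix $\mathbf{X}$ that generalizes $\mathbf{X}_3$ by stacking $k$ copies of the coordinate blocks $C(x,y)_{n}$, $C(y,z)_{n}$, $C(z,x)_{n}$ against a growing $H(f,g)$ block, joined by interface columns of the type $U,V,W$ that propagate powers of $f,g,h$ from one layer to the next. The base case $k=1$ is Lemma~\ref{lem:matrix X3} combined with Remark~\ref{rem:n-th symb power}(ii). In the inductive step the maximal minors are computed exactly as in the proof of Lemma~\ref{lem:matrix X3}, by block-triangular Laplace expansion: deleting a row of the $H(f,g)$ block returns the first family $(fgh)^k(f,g)^{(n-3)k}$; deleting a row inside the $C(x,y)$ block of the $i$-th layer produces, through the determinantal identities of Lemma~\ref{3mat} and multiplicativity of determinants of block-triangular matrices, precisely the $i$-th summand $f^{(k-i)(n-2)+2i}g^{k+i(n-3)}h^{k-i}x^{(i-1)n+1}(x,y)^{n-1}$; and the $C(y,z)$, $C(z,x)$ layers give the remaining two sums after the cyclic permutation of variables used in Lemma~\ref{lem:matrix X3}. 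To invoke Hilbert--Burch I would exhibit, as there, two elements of $I(\mathbf{X})$ forming a regular sequence --- the element $f^{(n-2)k}g^{k}h^{k}$ of the first family, and a sum of three coordinate generators, one from each of the last three families, that is divisible by none of the linear factors of $f,g,h$ --- so that $\grade I(\mathbf{X})=2$ (the reverse inequality being the standard codimension bound for an $(N{+}1)\times N$ matrix) and the resulting complex resolves $R/I(\mathbf{X})=R/J$. Since every entry of $\mathbf{X}$ has positive degree, this resolution is automatically minimal, which simultaneously proves that the displayed generators are minimal.

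It then remains to identify $J$ with $I^{(kn)}$. Both are unmixed of height two --- $J$ because $R/J$ is Cohen--Macaulay by the Hilbert--Burch resolution just obtained, and $I^{(kn)}$ by the definition of a symbolic power --- and the inclusion $J\subseteq I^{(kn)}$ is in hand, so by the principle used in Lemma~\ref{lem:matrix X3}(3) (an inclusion of unmixed ideals of the same height and equal multiplicity is an equality) it suffices to match multiplicities. The linearity formula gives $e(R/I^{(kn)})=(n^2+3)\binom{kn+1}{2}$, each of the $n^2+3$ points contributing $\binom{kn+1}{2}=e(R_{\mathfrak{p}}/\mathfrak{p}^{kn}R_{\mathfrak{p}})$, while $e(R/J)$ is read off from the graded resolution via the formula $e(R/J)=H_{R/J}(\reg(R/J)+\pd(R/J)-2)$ of \cite{refEis2}, exactly as in the proof of Lemma~\ref{lem:matrix X3}(3); the twists carried by $\mathbf{X}$ determine $\reg$ and $\pd$, and the two counts agree.

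The main obstacle is the inductive minor computation. Verifying that $\mathbf{X}$ has \emph{exactly} the stated maximal minors, with the precise exponents $(k-i)(n-2)+2i$, $k+i(n-3)$, and $k-i$ in the $i$-th layer, requires careful control of how the interface columns $U,V,W$ carry powers of $f,g,h$ across layers through Lemma~\ref{3mat}, and of the binomial data hidden in the vectors $E_j$. Arranging the block structure so that no spurious minors appear, and so that the two multiplicities agree on the nose rather than up to a lower-dimensional discrepancy, is the delicate point; the remaining steps are bookkeeping of binomial coefficients and graded twists.
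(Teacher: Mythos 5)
Your architecture is the one the paper actually uses: Theorem \ref{thm:gens symb powers} is derived there from Theorem \ref{thm:recursion}, which builds precisely the block Hilbert--Burch matrix you describe (its $\mathbf{X}_{3k}$), computes the maximal minors by block-triangular expansion, obtains the resolution from Hilbert--Burch with a regular sequence chosen as you propose, and identifies the minor ideal with $(f,g)^{kn}\cap(x,y)^{kn}\cap(y,z)^{kn}\cap(z,x)^{kn}$ by the same unmixedness-plus-multiplicity comparison. Your verification of $J\subseteq I^{(kn)}$ and your count $e(R/I^{(kn)})=(n^2+3)\binom{kn+1}{2}$ are both correct and match the paper's.

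The genuine gap is the construction of the interface columns beyond the first layer, which you rightly call ``the delicate point'' but leave unresolved, and your stated plan for it --- columns ``of the type $U,V,W$'' whose entries are powers of $f,g,h$ times binomial vectors $E_j$, evaluated against $H(f,g)$ via Lemma \ref{3mat} --- cannot extend past the first layer. The polynomial that the $i$-th layer's column must contribute to the minor computation (for the $(x,y)$-block it is $f^{(k-1-i)(n-3)+2i}g^{(i+1)(n-2)-2}x^{in+1}$) carries a factor $x^{in+1}$ with $i\ge 1$, and no determinant formed from a column paired only with the $H(f,g)$ block can produce such a factor, since those minors involve only $f$ and $g$; the column must interact with the earlier coordinate blocks, and no closed formula for it is written down even in the paper. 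The paper's resolution of this is the idea your sketch is missing: for $j\ge 4$ the column $S_j$ is defined only \emph{implicitly}, as a vector of coefficients expressing the prescribed polynomial as a combination of the maximal minors of the matrix three layers back, $\mathbf{X}_{j-3}$, and the \emph{existence} of $S_j$ is itself part of the induction --- it is proved by using the inductive identification of $I(\mathbf{X}_{j-3})$ with an intersection of powers (part (3) of Theorem \ref{thm:recursion}) to check that the prescribed polynomial lies in $I(\mathbf{X}_{j-3})$. This interleaving also forces the induction to advance one coordinate block at a time through the intermediate fat-point ideals $I(\mathbf{X}_{3i+1})$, $I(\mathbf{X}_{3i+2})$, rather than in strides of three as your induction on $k$ suggests. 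Without this mechanism the minor computation in your inductive step cannot be carried out, so the reverse inclusion $I^{(kn)}\subseteq J$ remains unproved in your outline.
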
 

This is a consequence of the following more general result, which also describes the Hilbert-Burch matrix of $I^{(k n)}$ and other related ideals. 

\begin{thm} 
     \label{thm:recursion}
Fix integers $n\geq3$ and $k\geq 1$, put $f=y^n-z^n, g=z^n-x^n, h=x^n-y^n \in  R=\field[x,y,z]$, and  
define  recursively   block matrices $\mathbf{X}_{j}\in \mathcal{M}_ {(k(n-3)+j n + 1)\times(k(n-3)+j n)}(R)$, for $0 \le j \le 3 k$, as follows: 

\noindent If $j \ge 1$  write $j = 3 i + r$ with integers $i, r$ such that $0 \le i, \ 1 \le r \le 3$, put 
$\mathbf{X}_{0} = H(f, g)_{k (n-3)}$ and 
\[
\mathbf{X}_{j} = \begin{bmatrix}
\mathbf{X}_{j-1} & Y_j \\
0 & Z_j
\end{bmatrix}, 
\]
where 
\[
Z_j = \begin{cases}
C(x,y)_n & \text{if } r = 1 \\
C(y,z)_n & \text{if } r = 2 \\
C(z,x)_n & \text{if } r = 3
\end{cases}
\quad
\text{ and }
\quad
Y_j = \begin{bmatrix}
S_j & 0 \\
0 & 0
\end{bmatrix}
\]
with  matrix $S_j\in \mathcal{M}_{[k (n-3) + 1 + (i-1)n] \times 1}(R)$ such that 
\[
S_1 = (-1)^{k(n-3)}x f e_{n-2}, \quad S_2 = 
\begin{bmatrix}
0 \\
(-1)^{(k-1)(n-3)} y g E_{n-3}
\end{bmatrix},  \quad S_3 = 
\begin{bmatrix}
(-1)^{n-3} z h E_{k (n-3)} \\
0
\end{bmatrix}
\] 
and,  if $4 \le j \le 3k$, 
\[
\det \begin{bmatrix}
\mathbf{X}_{j-3} & S_j
\end{bmatrix}
 = \begin{cases}
f^{(k-1-i) ( n-3) + 2i} g^{(i+1) (n-2) - 2}  x^{in +1}  & \text{if } r = 1 \\
g^{(k-1-i) ( n-3) + 2i} h^{(i+1) (n-2) - 1}  y^{in +1}  & \text{if } r = 2 \\
f^{(i+1) (n-2) - 1} h^{(k-1-i) ( n-3) + 2i + 1}  z^{in +1}  & \text{if } r = 3. 
\end{cases}
\]
Such column vectors $S_j$ do exist. 

Then the ideal of maximal minors of $\mathbf{X}_{j}$ has the following properties: 
\begin{enumerate}

\item If $1 \le j \le 3k$, then 
\[
I(\mathbf{X}_{j})  = \begin{cases}
h \cdot I (\mathbf{X}_{j-1}) +f^{(k-1-i) ( n-3) + 2i + 1} g^{(i+1) (n-2) - 1}  x^{in +1} \cdot (x, y)^{n-1} & \text{if } r = 1 \\
f \cdot I (\mathbf{X}_{j-1}) +   g^{(k-1-i) ( n-3) + 2i + 1} h^{(i+1) (n-2)}  y^{in +1}  \cdot (y, z)^{n-1} & \text{if } r = 2 \\
g \cdot I (\mathbf{X}_{j-1}) +  f^{(i+1) (n-2)} h^{(k-1-i) ( n-3) + 2i + 2}  z^{in +1} \cdot (x, z)^{n-1}& \text{if } r = 3.  
\end{cases}
\]

\item A minimal free resolution of $I(\mathbf{X}_{j})$ is 
\renewcommand\arraystretch{1}
 \[
 0\to
  \begin{array}{c}
R(-n [k(n-3) + j + 1 ])^{k(n-3)}\\
\oplus \\
\bigoplus_{\ell=1}^i R(-n [k(n-3) +j+\ell] - 1)^{3n}  \\
\oplus\\
R(-n [k(n-3) +j + i + 1]- 1)^{rn} \\
\end{array}
\stackrel{\mathbf{X}_{j}}{\longrightarrow } 
  \begin{array}{c}
R(-n [k(n-3) +j])^{k(n-3)+1}\\
\oplus \\
\bigoplus_{\ell=1}^i R(-n [k(n-3) +j+\ell])^{3n}  \\
\oplus\\
R(-n [k(n-3) +j+i+1])^{rn} \\
\end{array}
 \to I(\mathbf{X}_{j})\to 0.
 \]

\item If $1 \le j \le 3k$, then 
\[
I( \mathbf{X}_{j})  = \begin{cases}
(f, g)^{k (n-3) + j} \cap  (x, y)^{(i+1) n} \cap (y,z)^{i n} \cap (x, z)^{i n} & \text{if } r = 1 \\
(f, g)^{k (n-3) + j} \cap  (x, y)^{(i+1) n} \cap (y,z)^{(i+1) n} \cap (x, z)^{i n} & \text{if } r = 2 \\
(f, g)^{k (n-3) + j} \cap   (x, y)^{(i+1) n} \cap (y,z)^{(i+1) n} \cap (x, z)^{(i+1) n}& \text{if } r = 3. 
\end{cases}
\] 

\end{enumerate}
\end{thm}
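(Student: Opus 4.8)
The plan is to establish (1), (2), and (3) simultaneously by induction on $j$, carrying along as part of the inductive hypothesis the assertion that a connecting vector $S_j$ with the prescribed value of $\det\begin{bmatrix}\mathbf{X}_{j-3} & S_j\end{bmatrix}$ exists. The base cases are $j=0$, where $\mathbf{X}_0=H(f,g)_{k(n-3)}$ and (1)--(2) are supplied by Lemma \ref{HC}, together with the first round $1\le j\le 3$, which is handled by the direct computations of Lemma \ref{lem:matrix X3} (the case $j=3$ being that lemma verbatim). For the inductive step I assume $j\ge 4$ and that the three conclusions hold for all smaller indices.

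For part (1) I would compute the maximal minors of $\mathbf{X}_j=\begin{bmatrix}\mathbf{X}_{j-1} & Y_j\\ 0 & Z_j\end{bmatrix}$ by cases according to which row is deleted. Deleting a row of the upper block $\mathbf{X}_{j-1}$ leaves a block upper-triangular square matrix whose diagonal blocks are a maximal submatrix of $\mathbf{X}_{j-1}$ and the block $Z_j$; since $\det Z_j=\det C(\cdot,\cdot)_n$ equals $h$, $f$, or $g$ for $r=1,2,3$ by Lemma \ref{HC}, these minors generate $\det(Z_j)\cdot I(\mathbf{X}_{j-1})$, the first summand of (1). Deleting instead a row of $Z_j$ produces a minor that, because the connecting column $S_j$ vanishes on the rows of the interposed blocks $Z_{j-1}$ and $Z_{j-2}$, factors as a product of a maximal minor of $H(\cdot,\cdot)_{n-1}$ (arising from the row-reduced $C$-block), the two determinants $\det Z_{j-1}$ and $\det Z_{j-2}$, and the connecting determinant $\det\begin{bmatrix}\mathbf{X}_{j-3} & S_j\end{bmatrix}$. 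Substituting the prescribed value of the latter and letting the deleted row range over $Z_j$ shows, via Lemma \ref{HC}, that these minors generate precisely the second summand with its ideal factor $(x,y)^{n-1}$ (respectively $(y,z)^{n-1}$, $(x,z)^{n-1}$). The evaluation of $\det\begin{bmatrix}\mathbf{X}_{j-3} & S_j\end{bmatrix}$ and the existence of $S_j$ are the delicate points: I would construct $S_j$ explicitly from the binomial vectors $E_m$, as in the seed vectors $S_1,S_2,S_3$, and then evaluate the determinant by peeling off the determinants of the interposed $C$-blocks and reducing the residual $H(f,g)$-block by the identities of Lemma \ref{3mat}, where the binomial entries collapse to a single monomial by means of $f+g+h=0$.

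Granting (1), part (2) follows from the graded Hilbert--Burch theorem. I would first observe that the height, and hence (as $R$ is Cohen--Macaulay) the grade, of $I(\mathbf{X}_j)$ equals two: the explicit generators of (1) have no common factor, since the pure power products in $f,g,h$ obtained by iterating $\det(Z_j)\cdots\det(Z_1)\cdot(f,g)^{k(n-3)}$ share no divisor with the mixed generators carrying the factor $x^{in+1}(x,y)^{n-1}$ and its analogues. Because $\mathbf{X}_j$ has one more row than column and all entries lie in the homogeneous maximal ideal, Hilbert--Burch yields the displayed resolution of $I(\mathbf{X}_j)$; the graded shifts are then read off from the homogeneity of $\mathbf{X}_j$, each recursion step contributing the degree-$n$ determinant $\det Z_j$ (which accounts for the increments $-n[k(n-3)+j+\ell]$) and a connecting column of the degree forced by the monomial in the $S_j$-formula.

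Finally, for part (3) I would follow the template of Lemma \ref{lem:matrix X3}(3). Writing $J_j$ for the claimed intersection, the containment $I(\mathbf{X}_j)\subseteq J_j$ is checked on the generators of (1) using $f\in(y,z)^n$, $g\in(x,z)^n$, $h\in(x,y)^n$, and $h=-(f+g)\in(f,g)$; the same memberships give $\det(Z_j)\cdot J_{j-1}\subseteq J_j$, so the inductive identity $I(\mathbf{X}_{j-1})=J_{j-1}$ propagates. For the reverse containment I would argue that both ideals are unmixed of height two --- the left side because $R/I(\mathbf{X}_j)$ is Cohen--Macaulay by (2), the right side by construction --- and that they have equal multiplicity. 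The multiplicity of $R/I(\mathbf{X}_j)$ is computed from the resolution of (2) through the formula of \cite[Theorem 4.2(2)]{refEis2}, while that of $R/J_j$ is the sum of the multiplicities of its components, namely $n^2\binom{k(n-3)+j+1}{2}$ from $(f,g)^{k(n-3)+j}$ (by the linearity formula) and the binomial contributions of the three fat coordinate points; matching the two totals forces equality. I expect the evaluation of $\det\begin{bmatrix}\mathbf{X}_{j-3} & S_j\end{bmatrix}$ in part (1) --- producing the explicit binomial connecting vectors and collapsing the determinant to the single prescribed monomial --- to be the principal obstacle, since the recursion for the minors, the graded shifts, and the multiplicity count are all downstream of that formula.
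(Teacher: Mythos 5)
Your overall architecture --- simultaneous induction on $j$ with Lemma \ref{lem:matrix X3} as the base, block-triangular minor computations for (1), Hilbert--Burch for (2), and unmixedness plus multiplicity comparison for (3) --- is exactly the paper's, and those portions of your outline would go through. The genuine gap is in the one step you yourself flag as the principal obstacle: the existence of $S_j$ for $j\ge 4$. You propose to build $S_j$ explicitly out of the binomial vectors $E_m$, as in the seeds $S_1,S_2,S_3$, and to evaluate $\det\begin{bmatrix}\mathbf{X}_{j-3} & S_j\end{bmatrix}$ by peeling off $C$-blocks and applying Lemma \ref{3mat} to a residual $H(f,g)$-block. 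This cannot work as stated: cofactor expansion along the appended column exhibits $\det\begin{bmatrix}\mathbf{X}_{j-3} & S_j\end{bmatrix}$ as the $R$-linear combination of the signed maximal minors of $\mathbf{X}_{j-3}$ whose coefficients are the entries of $S_j$, and if $S_j$ is supported only on the rows of the $H(f,g)_{k(n-3)}$ block (the only place Lemma \ref{3mat} applies), the result lies in $\det(Z_{j-3})\cdots\det(Z_1)\cdot(f,g)^{k(n-3)}$, an ideal of polynomials divisible by $h^i f^{i-1}g^{i-1}$ (for $r=1$). It can therefore never equal the required target $f^{(k-1-i)(n-3)+2i}g^{(i+1)(n-2)-2}x^{in+1}$, which carries the factor $x^{in+1}$ and is not divisible by $h$. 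So $S_j$ must also have entries against the rows of the interior $C$-blocks, and no closed binomial form for those entries is on offer.

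The paper's resolution is non-constructive and is the idea you are missing: one never writes $S_j$ down. Instead one invokes the inductive hypothesis (3) at level $j-3$, which identifies $I(\mathbf{X}_{j-3})$ with an explicit intersection $(f,g)^{\bullet}\cap(x,y)^{\bullet}\cap(y,z)^{\bullet}\cap(x,z)^{\bullet}$, and checks directly (via $f\in(y,z)^n$, $g\in(x,z)^n$, $h\in(x,y)^n$) that the prescribed target polynomial is a member of that intersection, hence of $I(\mathbf{X}_{j-3})$. Since $I(\mathbf{X}_{j-3})$ is generated by the maximal minors of the almost-square matrix $\mathbf{X}_{j-3}$, the target is some $R$-linear combination of the signed minors, and the column of its coefficients is the desired $S_j$. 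This makes the existence of $S_j$ a one-line consequence of part (3) one round earlier, and is precisely why the paper must run statements (1)--(3) and the existence of $S_j$ as a single simultaneous induction. With that replacement your treatment of (1)--(3) proceeds as you describe; I would only add that in (2) the paper secures grade two by exhibiting an explicit regular sequence $h\alpha,\ f^{\bullet}g^{\bullet}x^{\bullet}\beta$ with $\alpha\in I(\mathbf{X}_{j-1})$ chosen outside the relevant linear primes, a slightly more careful version of your common-factor argument.
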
 

\begin{rem}
(i) The matrix $\mathbf{X}_{3}$ in the above theorem is the same as the matrix $\mathbf{X}_{3}$ given in Lemma \ref{lem:matrix X3}. 

(ii) If $n =3$, then $\mathbf{X}_{0}$ is an empty matrix, and thus $\mathbf{X}_{1} = \begin{bmatrix}
Y_1 \\
Z_1
\end{bmatrix}. $
\end{rem}

\begin{proof}[Proof of Theorem \ref{thm:recursion}] 

If $j = 3$, then claims $(2)$ and $(3)$ have been shown in Lemma \ref{lem:matrix X3}. Furthermore, there the minimal generators of $I( \mathbf{X}_{3})$ are given.  Arguments entirely similar to those in the proof of Lemma \ref{lem:matrix X3} establish the analogous statements for $I( \mathbf{X}_{2})$ and $I( \mathbf{X}_{1})$. From the generating sets of these ideals one infers that claim $(1)$ is true if $1 \le j \le 3$. 

Let $j \ge 4$, and thus $i \ge 1$.  We show all assertions simultaneously assuming their correctness for smaller matrices. 

$(0)$ We begin by proving that a column vector $S_j$ with the claimed property exists. We check this depending on the remainder $r$. 

Let $r = 1$, so $j = 3 i + 1$. Recall that $f \in (y,z)^n$, \ $g \in (x,z)^n$, and $h \in (x,y)^n$. It follows that 
\begin{equation*}
\begin{split}
f^{(k-1-i) ( n-3) + 2i} g^{(i+1) (n-2) - 2}  x^{in +1} \in & \ (f, g)^{k (n-3) + 3i-2} \cap  (x, y)^{i n} \cap (y,z)^{(i-1)n} \cap (x, z)^{(i-1)n} \\
& = \ I(\mathbf{X}_{3 (i-1) + 1}) = I(\mathbf{X}_{j-3}), 
\end{split}
\end{equation*}
where the first equality is due to the induction hypothesis. Hence $f^{(k-1-i) ( n-3) + 2i} g^{(i+1) (n-2) - 2}  x^{in +1}$ is a linear combination of the minimal generators of $I(\XX_{j-3})$. These generators can be taken as the maximal minors of $\XX_{j-3}$. Thus, collecting the coefficients of the minors  with suitable signs in a column vector gives the desired vector $S_j$. 

Let $r =2$. Then the induction hypothesis implies  
\begin{equation*}
\begin{split}
g^{(k-1-i) ( n-3) + 2i} h^{(i+1) (n-2) - 1}  y^{in +1} \in & \ (f, g)^{k (n-3) + 3i-1} \cap  (x, y)^{i n} \cap (y,z)^{i n} \cap (x, z)^{(i-1)n} \\
& = \ I(\mathbf{X}_{3 (i-1) + 2}) = I(\mathbf{X}_{j-3}). 
\end{split}
\end{equation*}
Now the existence of a vector $S_j$ follows as in the case where $r=1$. 

If $r = 3$, one similarly gets 
\begin{equation*}
\begin{split}
f^{(i+1) (n-2) - 1} h^{(k-1-i) ( n-3) + 2i + 1}  z^{in +1} \in & \ (f, g)^{k (n-3) + 3i-1} \cap  (x, y)^{i n} \cap (y,z)^{i n} \cap (x, z)^{i n} \\
& = \ I(\mathbf{X}_{3 (i-1) + 3}) = I(\mathbf{X}_{j-3}),  
\end{split}
\end{equation*}
and the existence of $S_j$ follows. 


Next we provide the arguments necessary to justify claims (1)--(3).

$(1)$ Recall that $
\mathbf{X}_{j} = \begin{bmatrix}
\mathbf{X}_{j-1} & Y_j \\
0 & Z_j
\end{bmatrix}
$. We start by examining the maximal minors of  $\mathbf{X}_{j}$ resulting from discarding one of the rows in which the block $\mathbf{X}_{j-1}$ is found. By properties of block upper-triangular matrices, such a minor is the product of a maximal minor of $\mathbf{X}_{j-1}$ and $\det(Z_j)$. Therefore, these minors generate 
$$\det(Z_j)I(\mathbf{X}_{j-1})=
\begin{cases}
h\cdot I(\mathbf{X}_{j-1}) &\text{if } r=1\\
f\cdot I(\mathbf{X}_{j-1}) & \text{if } r=2\\
g\cdot I(\mathbf{X}_{j-1}) &\text{if } r=3.
\end{cases}$$
Analyzing the maximal minors of  $\mathbf{X}_{j}$ resulting from discarding one of the rows corresponding to the lower blocks, one gets the product of a minor of $H(x,y)_{n-1}, H(y,z)_{n-1}$ or $H(x,z)_{n-1}$ (depending on $r$) and the determinant of the matrix formed by $\mathbf{X}_{j-1}$ and the first column of $Y_j$, i.e. $\det \left[ \begin{matrix}\mathbf{X}_{j-1} & S_j\end{matrix}\right]$. The ideals generated by the former minors are given in Lemma \ref{HC} and the value for this latter determinant is given by hypothesis. Hence, these last minors of $\mathbf{X}_{j}$ generate the ideal 
$$
\begin{cases}
f^{(k-1-i) ( n-3) + 2i} g^{(i+1) (n-2) - 2}  x^{in +1} (x,y)^{n-1} & \text{if } r = 1 \\
g^{(k-1-i) ( n-3) + 2i} h^{(i+1) (n-2) - 1}  y^{in +1} (y,z)^{n-1} & \text{if } r = 2 \\
f^{(i+1) (n-2) - 1} h^{(k-1-i) ( n-3) + 2i + 1}  z^{in +1} (x,z)^{n-1} & \text{if } r = 3. 
\end{cases}$$
Summing the two ideals above gives the formulas in part (1)

$(2)$  By the inductive hypothesis $I (\mathbf{X}_{j-1})$ is a perfect height two ideal, therefore it is not contained in the union of the prime ideals generated by each of the linear divisors of $f, g, h$ and the linear forms $x,y,z$. Consequently there is a polynomial $\alpha\in I (\mathbf{X}_{j-1})$ that is not divisible by any of the linear factors of $f,g$, nor by $x$.  If $r=1$, consider the polynomial $h\alpha$, which is by $(1)$ an element of  $I (\mathbf{X}_{j})$. We shall find a polynomial $\beta\in (x,y)^{n-1}$ so that $h\alpha$ and $ f^{(k-1-i) ( n-3) + 2i + 1} g^{(i+1) (n-2) - 1}  x^{in +1}\beta$ form a regular sequence in  $I (\mathbf{X}_{j})$. Indeed, one can pick $\beta\in (x,y)^{n-1}$ so that $h\alpha$ and $\beta$ form a regular sequence. This insures that the forms $h\alpha$ and $ f^{(k-1-i) ( n-3) + 2i + 1} g^{(i+1) (n-2) - 1}  x^{in +1}\beta$ have no common factors of positive degree, thus they form a regular sequence. Analogous arguments show that the grade of $I (\mathbf{X}_{j})$ is 2 in the remaining cases $r=2$ and $r=3$. 

The claim on the minimal free resolution of $I (\mathbf{X}_{j})$ now follows by Hilbert-Burch. The formulas for the graded shifts in the resolution are found by taking into account the inductive hypothesis, together with the formulas for generators of $I (\mathbf{X}_{j})$ found in part (1) and the structure of the blocks of the matrix $\mathbf{X}_{j}$, specifically the fact that the entries of $Z_j$ are linear.

$(3)$ Set 
\[
J(n,j) =  \begin{cases}
(f, g)^{k (n-3) + j} \cap  (x, y)^{(i+1) n} \cap (y,z)^{in} \cap (x, z)^{in} & \text{if } r = 1 \\
(f, g)^{k (n-3) + j} \cap  (x, y)^{(i+1) n} \cap (y,z)^{(i+1) n} \cap (x, z)^{in} & \text{if } r = 2 \\
(f, g)^{k (n-3) + j} \cap   (x, y)^{(i+1) n} \cap (y,z)^{(i+1) n} \cap (x, z)^{(i+1) n}& \text{if } r = 3. 
\end{cases}
\]
Using the recursive formula for  $I (\mathbf{X}_{j})$ given in $(1)$ and the inductive hypothesis $I (\mathbf{X}_{j-1})=J(n,j-1)$, one sees that $I (\mathbf{X}_{j}) \subseteq J(n,j)$.  In order to establish equality $I (\mathbf{X}_{j})=J(n,j)$ it is sufficient to show that the ideals on both sides  are unmixed  and have  the same multiplicity. The unmixedness of $J(n,j)$ follows from its definition. The ideal $I (\mathbf{X}_{3})$ is unmixed as well because $R/I (\mathbf{X}_{j})$ is Cohen-Macaulay by $(2)$. 
It remains to compare the multiplicities. 
 Using \cite[Theorem 4.2 (2)]{refEis2} and the resolution in $(2)$ we compute
\begin{eqnarray*}
e( R/I(\mathbf{X}_{j})) &=& H_{R/I(\mathbf{X}_{j})}({\rm reg}( R/I(\mathbf{X}_{j})+{\rm pd}( R/I(\mathbf{X}_{j}))-2)\\
&=& H_{R/I(\mathbf{X}_{j})}(n[k(n-3)+j +i+1]-1) \\
&=& H_R(n[k(n-3)+j +i+1]-1)-(k(n-3)+1)H_R(n(i+1)-1)\\
&&-3n\sum_{\ell=1}^{i}H_R(n(i+1-\ell)-1)+k(n-3)H_R(ni-1) +3n\sum_{\ell=1}^{i}H_R(n(i+1-\ell)-2)\\
&=& \binom{n[k(n-3)+j +i+1]+1}{2}-(k(n-3)+1)\binom{n(i+1)+1}{2}\\
&&+k(n-3)\binom{ni+1}{2}-3n^2\binom{i+1}{2},
\end{eqnarray*}
where some of the terms in the above formula are obtained by evaluating
\begin{eqnarray*}
\sum_{\ell=1}^{i}H_R(n(i+1-\ell)-1)-\sum_{\ell=1}^{i}H_R(n(i+1-\ell)-2)&=&\sum_{\ell=1}^{i} (n(i+1-\ell))=\frac{ni(i+1)}{2}.\\
\end{eqnarray*}

It can be verified by straightforward computation that 
$$e( R/I(\mathbf{X}_{j}))=e( R/J(n,k))=
\begin{cases}
n^2\binom{k(n-3)+j +1}{2}+2\binom{in+1}{2}+\binom{(i+1)n+1}{2}& \text{if } r = 1 \\
n^2\binom{k(n-3)+j +1}{2}+\binom{in+1}{2}+2\binom{(i+1)n+1}{2}& \text{if } r = 2 \\
n^2\binom{k(n-3)+j +1}{2}+3\binom{(i+1)n+1}{2}& \text{if } r = 3, \\
\end{cases}
$$
whence $I(\mathbf{X}_{j})=J(n,k)$ follows.

 \end{proof} 
 
 \begin{proof}[Proof of Theorem \ref{thm:gens symb powers}]
 We use the notation of Theorem \ref{thm:recursion}. Its part (3) shows that $I(\XX_{3k}) = I^{(kn)}$.  Using the recursion given in Theorem \ref{thm:recursion}(1), a routine computation yields the claimed generating set of $I^{(k n)}$. It is minimal because it consists of $k n +1$ polynomials, which is the number of minimal generators of $I^{(k n)}$ by Theorem \ref{thm:recursion}(2). 
 \end{proof}
 
 \begin{rem}\label{rem:compact gens symb powers} The conclusion of Theorem \ref{thm:gens symb powers} can be rewritten more compactly by presenting  $I^{(k n)}$ as a sum of four ideals: 
\begin{eqnarray*}
I^{(k n)} &= &(fgh)^k(f,g)^{(n-3)k} \\
&& + x (x,y)^{n-1} g^{n-2}f^2 \cdot (f^{n-2} g h, \ g^{n-2}f^2 x^n)^{k-1}\\[3pt]
&& + y (y,z)^{n-1} h^{n-2}g^2 \cdot (f g^{n-2} h, \ h^{n-2}g^2 y^n)^{k-1}\\[3pt] 
&& + z (z,x)^{n-1} f^{n-2}h^2 \cdot (g f h^{n-2}, \ f^{n-2}h^2z^n)^{k-1}. 
%
\end{eqnarray*}
\end{rem}

\subsection{Regularity of symbolic powers}

In Corollary \ref{cor:res-ordinary} we gave a formula for the regularity of ordinary powers of Fermat ideals, which is a linear function in $r$ for all $r\geq 2$: $\reg(I^r)=r(n+1)+n-1$. In fact it is known by \cite{refCHT} that  $\reg(I^{r})$ becomes a linear function of $r$ for large enough values of the exponent.  We now turn our attention towards the Castelnuovo-Mumford regularity of the symbolic powers. In the case of the Fermat ideals, it turns out that this is also given by a linear function for high enough powers, as we will show in Theorem \ref{thm:reg-symb}.  By contrast, in general it can only be shown as in \cite[Theorem 4.3]{refCHT}  that, if $\srees(I)$ is finitely generated, then $\reg(I^{(m)})$ is a periodic linear function for $m$ large enough, i.e. there exist integers $a_i$ and $b_i$ such that $\reg(I^{(m)})=a_im+b_i$ for $t\equiv i$ mod $n$ and $t\gg0$.  

We now proceed to give an explicit formula for the regularity of high enough symbolic powers of Fermat ideals.

\begin{thm}\label{thm:reg-symb}
Let $I=(x(y^n-z^n),y(z^n-x^n),z(x^n-y^n))$ with $n\geq 3$. The symbolic powers of $I$ have their Castelnuovo-Mumford regularity given by
$$\reg(I^{(m)})=m(n+1), \text{ for } m\gg0.$$
\end{thm}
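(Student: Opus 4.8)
The plan is to first pin down the regularity exactly along the subsequence of symbolic powers indexed by multiples of $n$, where Theorem \ref{thm:recursion} supplies an explicit minimal free resolution, and then to propagate this to all large $m$ by comparing consecutive symbolic powers. The point is that the explicit resolutions only cover the case $n \mid m$, so the whole difficulty lies in filling the gaps between two consecutive multiples of $n$.

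First I would specialize Theorem \ref{thm:recursion}(2) to $j = 3k$. Since $I(\mathbf{X}_{3k}) = I^{(kn)}$ by part (3), and $j = 3k$ corresponds to $i = k-1$, $r = 3$, every free module in the displayed resolution of the ideal $I^{(kn)}$ is generated in explicitly listed degrees. Applying $\reg M = \max_{\ell,s}(a_{\ell,s} - \ell)$ over the graded shifts $a_{\ell,s}$ of the two free modules, the dominant contribution in both homological degrees comes from the summands indexed by the largest value $\ell = i+1 = k$, and a direct evaluation gives
\[
\reg\bigl(I(\mathbf{X}_{j})\bigr) = n\bigl[k(n-3) + j + i + 1\bigr], \quad\text{so}\quad \reg\bigl(I^{(kn)}\bigr) = n\bigl[k(n-3) + 3k + (k-1) + 1\bigr] = nk(n+1).
\]
Thus $\reg(I^{(m)}) = m(n+1)$ whenever $n \mid m$; note this already holds for every $k \ge 1$, not merely asymptotically. (A sanity check: for $k = 1$ this returns $\reg(I^{(n)}) = n(n+1)$, while by contrast $\reg(I) = 2n$ from Corollary \ref{cor:res-ordinary}, which is why the final statement must be restricted to $m \gg 0$.)

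Next I would compare consecutive symbolic powers. Since each of the $n^2 + 3$ points is reduced, $I^{(m)} = \bigcap_i \p_i^m$, and the tautological map $x \mapsto (x \bmod \p_i^{m+1})_i$ fits into the short exact sequence
\[
0 \to I^{(m+1)} \to I^{(m)} \to M_m \to 0, \qquad M_m \hookrightarrow \bigoplus_i \p_i^m/\p_i^{m+1}.
\]
Because each $\p_i$ is generated by two linear forms, the target is a direct sum of copies of $(R/\p_i)(-m)$ and has regularity exactly $m$. Granting a bound of the shape $\reg(M_m) \le \reg(I^{(m)}) + n$, the long exact sequence in local cohomology yields the per-step estimate $\reg(I^{(m+1)}) \le \reg(I^{(m)}) + (n+1)$ for $m \gg 0$. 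Applying this bound forward from $kn$ and backward from $(k+1)n$, and sandwiching with the exact values $\reg(I^{(kn)}) = kn(n+1)$ from the first step, forces $\reg(I^{(m)}) = m(n+1)$ for every $m$ with $kn \le m \le (k+1)n$ and $k$ large, which is the assertion for all $m \gg 0$.

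The main obstacle is the per-step estimate, equivalently the control of the regularity of the difference module $M_m = I^{(m)}/I^{(m+1)}$ supported on the Fermat configuration. Although $M_m$ embeds in a module of regularity $m$, regularity is not monotone under passage to submodules, so I would instead bound the cokernel of $M_m \hookrightarrow \bigoplus_i \p_i^m/\p_i^{m+1}$; this cokernel measures the failure of the fat points to impose independent conditions in the relevant degrees, and it is precisely here that the special geometry of the Fermat pencil enters. An alternative route replaces the per-step estimate by \cite[Theorem 4.3]{refCHT}: once $\srees(I)$ is known to be Noetherian (as established elsewhere in this paper), $\reg(I^{(m)})$ is eventually a periodic linear function of $m$ of period $n$, and the first step identifies the linear function on the residue class $0 \bmod n$ as $m(n+1)$; the remaining task of showing that every residue class carries the same function is again governed by the same comparison between $I^{(m)}$ and its neighbours.
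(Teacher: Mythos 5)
Your first step—extracting $\reg(I^{(kn)})=kn(n+1)$ from the explicit resolution in Theorem \ref{thm:recursion}(2) with $j=3k$—is correct and coincides with the paper. The gap is in the interpolation to the remaining residue classes: your entire argument there rests on the per-step estimate $\reg(I^{(m+1)})\le\reg(I^{(m)})+(n+1)$, which you introduce with ``granting a bound of the shape $\reg(M_m)\le\reg(I^{(m)})+n$'' and never establish. As you yourself observe, $M_m$ is only a \emph{submodule} of $\bigoplus_i \p_i^m/\p_i^{m+1}$, and regularity is not monotone under passage to submodules; the proposed remedy (bounding the cokernel, i.e.\ the failure of the fat points to impose independent conditions) is exactly where the geometry of the configuration must enter, and no argument is supplied. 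The fallback via \cite[Theorem 4.3]{refCHT} has the same hole: eventual periodic linearity plus the value on the class $0 \bmod n$ does not determine the linear functions on the other $n-1$ residue classes, and you concede that identifying them requires the same unproved comparison. So the proof is incomplete at its central step.

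For contrast, the paper closes the gap by a different mechanism that avoids any step-by-step comparison. It exhibits a second explicit Hilbert--Burch matrix $\mathbf{X}_3'$ presenting $I^{(n-1)}=(f,g)^{n-1}\cap(x,y)^{n-1}\cap(y,z)^{n-1}\cap(x,z)^{n-1}$, reads off $\reg(I^{(n-1)})=(n-1)(n+1)$, and then works over the numerical semigroup $\mathcal{S}=\{an+b(n-1)\}$, which contains all large integers since $\gcd(n,n-1)=1$. For $m=an+b(n-1)$ the upper bound comes from $I^{(m)}=\bigl((I^{(n)})^a(I^{(n-1)})^b\bigr)^{\mathrm{sat}}$ together with the Conca--Herzog subadditivity $\reg(JK)\le\reg J+\reg K$ for ideals of dimension at most one \cite[Theorem 2.5]{refCH}; the lower bound is a Bezout argument showing every element of $I^{(m)}$ of degree below $m(n+1)$ is divisible by $fgh$, so the low-degree generators span only a height-one ideal and a minimal generator of degree at least $m(n+1)$ must exist. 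If you want to salvage your route, you would need to either prove the per-step estimate for this specific configuration or replace it with something like the paper's two-generator semigroup trick, which requires computing one more symbolic power exactly.
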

\begin{proof}
We begin by proving that the conclusion holds for $m=n$ and $m=n-1$. From part $(2)$ of Lemma \ref{lem:matrix X3} (with $k=1$), we have that $$\reg(I^{(n)})=n(n+1).$$ More generally, it follows by part $(2)$ of Theorem \ref{thm:recursion} that $\reg(I^{(nk)}=\reg(I(\mathbf{X}_{3k}))=nk(n+1)$ for all integers $k\geq 1$. Next we  set $f=y^n-z^n, g=z^n-x^n, h=x^n-y^n$ and we consider the block matrix $\mathbf{X}_{3}' \in \mathcal{M}_{(k(n-3)+3n+1)\times(k(n-3)+3n)}( R )$ given by
$$
\mathbf{X}_{3}'=\left[\begin{matrix}
H(f,g)_{n-4} & U' &  V' & W' \\ 
0 & C(x,y)_n & 0 & 0  \\ 
0 & 0& C(y,z)_n  & 0\\ 
0 & 0  & 0 & C(z,x)_n  \\ 
 \end{matrix}\right],$$
where the matrices $U', V', W'$ are defined analogously to the ones in Lemma \ref{lem:matrix X3}:
\begin{itemize}

\item The first column of $U'$ is $(-1)^{n-4}f e_{n-3}$, all other entries are zero. 

\item The first column of $V'$ is the vector $g  E_{n-4}$, all other entries  are zero. 

\item The first column of $W'$ is the vector $h  E_{n-4}$, all other entries are zero. 

\end{itemize}
 We make the following claims if $n \ge 4$:
\begin{enumerate}
\item The ideal of maximal minors of $\mathbf{X}_{3}'$ is 
\begin{equation*}
I(\mathbf{X}_{3}') =   (fgh)(f,g)^{n-4} 
 + f^2 g^{n-3}(x,y)^{n-1}  + g^{2} h^{n-3}(y,z)^{n-1}  +f^{n-3}h^{2}(z,x)^{n-1}.
\end{equation*}

\item  The minimal free resolution of  the cyclic module defined by the ideal above is 
\renewcommand\arraystretch{1}
 $$0\to
  \begin{array}{c}
R(-n^2)^{4n-4} \\
\end{array}
\stackrel{\mathbf{X}_{3}'}{\longrightarrow } 
  \begin{array}{c}
R(-n^2+n)^{n-3}\\
\oplus\\
R(-n^2+1)^{3n} \\
\end{array}
 \to R \to R/I(\mathbf{X}_{3}')\to 0.
 $$
 
 \item $$I(\mathbf{X}_{3}') =  (f,g)^{n-1} \cap (x, y)^{n-1} \cap (y,z)^{n-1} \cap (x,z)^{n-1}.$$
 
\end{enumerate}
The three claims follow exactly like in the proof of Lemma \ref{lem:matrix X3}. We leave the details to the diligent reader. Based on the free resolution given by our claim $(2)$ we deduce that $$\reg(I^{(n-1)})=n^2-1=(n-1)(n+1).$$
One checks that this equality is also true if $n = 3$.

Consider the set $\mathcal{S}=\{an+b(n-1) \ | \ a,b \in \mathbb{N}\}$. We will prove that for any $m\in \mathcal{S}$, we have $\reg(I^{(m)})=m(n+1)$. Indeed, set $m=an+b(n-1)$ and notice the containments
$$I^m=I^{an}I^{b(n-1)}\subseteq \left(I^{(n)}\right)^a\left(I^{(n-1)}\right)^b\subseteq I^{(m)},$$ which yield that $I^{(m)}=\left( \left(I^{(n)}\right)^a\left(I^{(n-1)}\right)^b\right)^{\rm{sat}}$, where the superscript sat denotes saturation with respect to the homogeneous maximal ideal. Consequently, the cohomological characterization of the Castelnuovo-Mumford regularity implies the inequality
\[
\reg\left(\left(I^{(n)}\right)^a\left(I^{(n-1)}\right)^b\right)\geq \reg(I^{(m)}).
\] 

Furthermore, iterated applications of \cite[Theorem 2.5]{refCH}, using the fact that $\dim(R/I^{(n)})=\dim(R/I^{(n-1)})=1$, yield that 
$$\reg\left(\left(I^{(n)}\right)^a\left(I^{(n-1)}\right)^b\right) \leq a\reg(I^{(n)})+b\reg(I^{(n-1)}).$$ 

Putting everything together gives
$$\reg(I^{(m)})\leq \reg((I^{(n)})^a(I^{(n-1)})^b) \leq a\reg(I^{(n)})+b\reg(I^{(n-1)})=an(n+1)+b(n-1)(n+1)=m(n+1).$$

To establish the opposite inequality it is sufficient to prove that there exist minimal generators of $I^{(m)}$ of degree at least $m(n+1)$. Towards this end we show that, if $\tau\in I^{(m)}$ and $\deg(\tau)<m(n+1)$, then $\tau\in(fgh)$. This follows easily by Bezout's Theorem. Indeed, consider any linear factor $\ell$ of the product $fgh$. Since the line defined by $\ell$ contains $n+1$ points at which $\tau$ vanishes to order at least $m$, the intersection multiplicity of $\tau$ and $\ell$ is at least $(n+1)m>\deg(\tau)\deg(\ell)$. Thus $\ell\mid \tau$ for every such linear form $\ell$, whence $(fgh)\mid \tau$. This shows that the generators of $I^{(m)}$ of degrees less than $m(n+1)$ generate an ideal of height one properly contained in $I^{(m)}$, therefore there must be additional minimal generators of higher degree. This gives in particular that $\reg(I^{(m)})\geq m(n+1)$.

 The two inequalities above prove that $\reg(I^{(m)})= m(n+1)$ for $m\in\mathcal{S}$. Noting that every large enough positive integer is an element of the semigroup $\mathcal{S}$, since $\gcd(n,n-1)=1$, finishes the proof.
\end{proof}

\begin{rem} It is natural  to ask for effective bounds on the magnitude of $m$ that would insure the formula in Theorem \ref{thm:reg-symb} applies. The proof of Theorem \ref{thm:reg-symb} gives that the Frobenius number of the semigroup $\mathcal{S}$ is one such bound. By work of Sylvester \cite{refSy} this Frobenius number is $n(n-1)-n-(n-1)=n^2-3n+1$, thus we obtain $$\reg(I^{(m)})=m(n+1) \text{ for }  m\geq n^2-3n+2.$$ Computational evidence suggests  that in fact $\reg(I^{(m)})=m(n+1)$ for  $m\geq n-2$. Indeed, this is true if $n = 3$ by using also Corollary \ref{cor:res-ordinary}.
\end{rem}


\section{Symbolic Rees algebras of Fermat ideals are Noetherian} \label{sec:symb Rees} 


It is well-known that, unlike the ordinary Rees algebra, the symbolic Rees algebra of a homogeneous ideal may in general not be Noetherian, even for ideals defining reduced sets of points.  In this section we show that for the Fermat family of ideals the symbolic Rees algebras are in fact Noetherian. A particular case of this result (the case $n=3$) can be found in \cite[Proposition 1.1]{refHS}, where it is derived as a direct consequence of a result in  \cite{refHH}. Our methods here are entirely disjoint from the approach of \cite{refHH, refHS}. 

The key to our approach is the following result.

\begin{prop}\label{prop:Ink}
Let $I=(x(y^n-z^n),y(z^n-x^n),z(x^n-y^n))$, with $n\geq 3$. 
Then 
$$I^{(nk)}={I^{(n)}}^k \mbox{ for all integers }k\geq 1.$$
\end{prop}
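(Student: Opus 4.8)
The plan is to prove $I^{(nk)} = \bigl(I^{(n)}\bigr)^k$ by establishing both containments separately, leveraging the explicit generator descriptions that the preceding results make available. The easy containment is $\bigl(I^{(n)}\bigr)^k \subseteq I^{(nk)}$: this follows from the general fact that products of symbolic powers sit inside the symbolic power of the sum of the exponents, that is, $I^{(a)} \cdot I^{(b)} \subseteq I^{(a+b)}$, so $\bigl(I^{(n)}\bigr)^k \subseteq I^{(nk)}$ immediately. The substance of the proposition lies in the reverse containment $I^{(nk)} \subseteq \bigl(I^{(n)}\bigr)^k$.

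For the reverse containment, my first step would be to record the explicit minimal generators of $I^{(n)}$, obtained from Lemma \ref{lem:matrix X3} (or Remark \ref{rem:n-th symb power}(ii)) with $k=1$, and the explicit minimal generators of $I^{(nk)}$ from Theorem \ref{thm:gens symb powers}. Then I would verify directly that each listed minimal generator of $I^{(nk)}$ can be written as a product of $k$ generators of $I^{(n)}$ (times ring elements as needed), i.e. that it lies in the $k$-th power ideal $\bigl(I^{(n)}\bigr)^k$. The four families of generators in Theorem \ref{thm:gens symb powers} are organized exactly so as to make this factorization transparent: the leading family $(fgh)^k (f,g)^{(n-3)k}$ is visibly the $k$-th power of the single generator family $(fgh)(f,g)^{n-3}$ of $I^{(n)}$, and the three summation families, with their exponents accumulating linearly in the summation index, are designed to telescope into products of the corresponding $I^{(n)}$ generators. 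The compact reformulation in Remark \ref{rem:compact gens symb powers}, which presents $I^{(nk)}$ as $I^{(n)}$-type data multiplied by $(k-1)$-st powers of two-generated ideals, strongly suggests setting up the bookkeeping inductively on $k$ rather than checking each generator by hand.

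Accordingly, I would prefer an inductive argument on $k$: assuming $I^{(n(k-1))} = \bigl(I^{(n)}\bigr)^{k-1}$, it suffices to show $I^{(nk)} \subseteq I^{(n)} \cdot I^{(n(k-1))}$, since combined with the induction hypothesis this gives $I^{(nk)} \subseteq I^{(n)} \cdot \bigl(I^{(n)}\bigr)^{k-1} = \bigl(I^{(n)}\bigr)^{k}$, and the opposite inclusion is the easy one above. To prove $I^{(nk)} \subseteq I^{(n)} \cdot I^{(n(k-1))}$ I would again compare the generator lists of Theorem \ref{thm:gens symb powers} at levels $k$, $k-1$, and $1$, exhibiting for each minimal generator at level $k$ a factorization into a level-$1$ generator times a level-$(k-1)$ generator. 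The exponent arithmetic in the summation families makes this match up term by term once one peels off the top index $i=k$ and shifts the remaining sum.

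The main obstacle I anticipate is not any single deep idea but the careful exponent bookkeeping in the three summation families of Theorem \ref{thm:gens symb powers}: one must check that the exponents $(k-i)(n-2)+2i$, $k + i(n-3)$, $k-i$, and $(i-1)n+1$ split correctly across a level-$1$ factor and a level-$(k-1)$ factor, and that the ideal factors $(x,y)^{n-1}$, $(y,z)^{n-1}$, $(z,x)^{n-1}$ are absorbed without loss. This is a routine but error-prone verification; the compact presentation of Remark \ref{rem:compact gens symb powers} is the right device to keep it manageable, since there the dependence on $k$ is isolated into the explicit $(k-1)$-st powers, making the inductive step essentially a matter of reading off one factor of each two-generated ideal.
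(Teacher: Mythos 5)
Your proposal takes essentially the same route as the paper: the paper reduces to the single containment $I^{(kn)} \subseteq I^{(n)} \cdot I^{((k-1)n)}$ and verifies it family-by-family on the minimal generators from Theorem \ref{thm:gens symb powers}, peeling off a level-one factor such as $f^2 g^{n-2} x (x,y)^{n-1}$ and checking that the remaining cofactor lies in $I^{((k-1)n)}$ (using the intersection description \eqref{eq:decomp I} rather than the level-$(k-1)$ generator list, which is only a cosmetic difference from your plan). Combined with the trivial inclusion $I^{(n)} \cdot I^{((k-1)n)} \subseteq I^{(kn)}$ and induction on $k$, this is precisely your argument, and the exponent bookkeeping you flag does go through.
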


\begin{proof}
Since the assertion is  tautologically true if $k = 1$, we assume now $k \ge 2$. We are going to establish the following claim: 

For each $k \ge 2$, 
\begin{equation}
     \label{eq:prod symb powers}
  I^{(k n)}  \subseteq  I^{(n)} \cdot I^{((k-1) n)}. 
\end{equation}

We check this using the list of minimal generators given in Theorem \ref{thm:gens symb powers}. It gives that $I^{(k n)}$ contains 
\[
(fgh)^k \cdot (f,g)^{(n-3)k} = [ (fgh) \cdot (f,g)^{n-3}] \cdot [(fgh)^{k-1} \cdot (f,g)^{(n-3)(k-1)}]. 
\]
Hence, $(fgh)^k \cdot (f,g)^{(n-3)k} \subset I^{(n)} \cdot I^{((k-1) n)}$. 

Next, we show that, for each $i \in [k]$, 
\begin{equation}
     \label{eq:inclusion gens}
f^{(k-i)(n-2) + 2i} g^{k + i (n-3)} h^{k-i} x^{(i-1)n+1} \cdot (x,y)^{n-1} \subset I^{(n)} \cdot I^{((k-1) n)}. 
\end{equation}
To this end rewrite the product on the left-hand side as 
\[
[ f^2 g^{n-2} x \cdot (x,y)^{n-1}] \cdot   f^{(k-i)(n-2) + 2i - 2} g^{k - 1 + (i-1) (n-3)} h^{k-i} x^{(i-1)n+1}. 
\]
Notice that $f^2 g^{n-2} x \cdot (x,y)^{n-1} \subset I^{(n)}$ (see, e.g., Remark \ref{rem:n-th symb power}(ii)). Moreover, we get 
\[
f^{(k-i)(n-2) + 2i - 2} g^{k - 1 + (i-1) (n-3)} h^{k-i} x^{(i-1)n+1} \in I^{((k-1) n)}
\]
because $h^{k-i} x^{(i-1)n} \in (x, y)^{(k-1) n}$, \  $f^{k-i} x^{(i-1)n} \in (y,z)^{(k-1) n}$, and $g^{k-1} \in (x,z)^{(k-1)n}$. Now the containment \eqref{eq:inclusion gens} follows. 

Similarly, one proves for each $i \in [k]$, 
\[
f^{k-i} g^{(k-i)(n-2) + 2i} h^{k + i (n-3)}   y^{(i-1)n+1} \cdot (y,z)^{n-1} \subset I^{(n)} \cdot I^{((k-1) n)} 
\]
and
\[
f^{k + i (n-3)} g^{k-i} h^{(k-i)(n-2) + 2i}  z^{(i-1)n+1} \cdot (z,x)^{n-1} \subset I^{(n)} \cdot I^{((k-1) n)}. 
\]
Comparing with Theorem \ref{thm:gens symb powers}, we have shown that each minimal generator of $I^{(kn)}$ is contained in $I^{(n)} \cdot I^{((k-1) n)}$, which gives the desired containment \eqref{eq:prod symb powers}. 
Since for every ideal $I$ one has the inclusion $I^{(n)} \cdot I^{(k-1)n}\subseteq I^{(kn)}$, we obtain  the equality  $I^{(n)} \cdot I^{(k-1)n}= I^{(kn)}$, which together with the inductive hypothesis finishes the proof.
\end{proof}

\begin{rem} 
   \label{rem:comp}
For $n=3$, the above Proposition was also proved in \cite[Proposition 1.1]{refHS} using a different method based on \cite[Proposition 3.5]{refHH}. We note that one cannot apply \cite[Proposition 3.5]{refHH} directly for proving this property  of Fermat ideals when the parameter $n$ is greater than 3. Indeed,  since, in the notation of \cite{refHH}, we have that the minimum degree of an element of a minimal set of generators for $I^{(n)}$ is $\alpha_n=\alpha(I^{(n)})=n^2$ and the maximum degree of an element of a minimal set of generators for $I^{(n)}$ is   $\beta_n=\beta(I^{(n)})=n^2+n$ we obtain $\alpha_n\beta_n=n^2(n^2+n)$. The hypothesis needed to employ \cite[Proposition 1.1]{refHS} is $\alpha_n\beta_n= n^2(n^2+3)$, which does not apply if $n^2+n\neq n^2+3$, that is if $n\neq 3$.
\end{rem}


Next we will show that the symbolic Rees algebra of a Fermat ideal $I$ is  Noetherian. We use the  observation \cite[Theorem 1.3]{refSch} that the Noetherian property of a symbolic Rees algebra is equivalent to the fact that any of its Veronese subalgebras is Noetherian. More precisely, we refer to the subalgebra   
$$\srees(I)^{(n)}:=\srees(I^{(n)})=\bigoplus_{k\geq0} I^{(nk)}$$
as the $n^{\rm th}$ \emph{Veronese subalgebra} of $\srees(I)$. In the case of Fermat ideals, as a corollary of our previous results, we have complete control on the structure of this algebra.

 As an important effect of this, it turns out that the symbolic Rees algebra of $I$  is Noetherian:

 \begin{thm}\label{thm:noeth}
 For any ideal $I$ desribing a Fermat configuration of points, the symbolic Rees algebra  $\srees(I)$ is Noetherian.
 \end{thm}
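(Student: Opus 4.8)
The plan is to reduce the Noetherianity of $\srees(I)$ to that of its $n^{\text{th}}$ Veronese subalgebra $\srees(I)^{(n)} = \bigoplus_{k \ge 0} I^{(nk)} t^{nk}$, invoking the criterion of \cite[Theorem 1.3]{refSch} which guarantees that the symbolic Rees algebra is Noetherian if and only if one (equivalently any) of its Veronese subalgebras is. This is the right Veronese to single out because Proposition \ref{prop:Ink} gives us precise structural control over exactly this piece: it asserts $I^{(nk)} = (I^{(n)})^k$ for all $k \ge 1$.

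First I would record that, thanks to Proposition \ref{prop:Ink}, the Veronese subalgebra is nothing but an ordinary Rees algebra:
\[
\srees(I)^{(n)} = \bigoplus_{k \ge 0} I^{(nk)} t^{nk} = \bigoplus_{k \ge 0} \left(I^{(n)}\right)^{k} t^{nk} \cong \rees\!\left(I^{(n)}\right).
\]
The ordinary Rees algebra of any ideal in a Noetherian ring is itself Noetherian, being a finitely generated algebra over $R$ (generated by the finitely many degree-one pieces, i.e. by the minimal generators of $I^{(n)}$ together with the variables of $R$). Since $R = \field[x,y,z]$ is Noetherian and $I^{(n)}$ is finitely generated—indeed Theorem \ref{thm:gens symb powers} (or Lemma \ref{lem:matrix X3} with $k=1$) gives its explicit minimal generators—the algebra $\rees(I^{(n)})$ is a finitely generated $R$-algebra and hence Noetherian by the Hilbert basis theorem.

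Having shown that the $n^{\text{th}}$ Veronese subalgebra of $\srees(I)$ is Noetherian, I would then apply \cite[Theorem 1.3]{refSch} in the reverse direction to conclude that $\srees(I)$ itself is Noetherian, completing the proof. The entire argument is short precisely because the hard work has already been done: the essential content is packaged into Proposition \ref{prop:Ink}, whose proof rests in turn on the explicit list of minimal generators of $I^{(kn)}$ furnished by Theorem \ref{thm:gens symb powers}.

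The main obstacle is conceptual rather than computational, and it lies entirely upstream of this theorem: the equality $I^{(nk)} = (I^{(n)})^k$ is not automatic, since in general one only has the inclusion $(I^{(n)})^k \subseteq I^{(nk)}$, and it is exactly the failure of such equalities that causes symbolic Rees algebras to be non-Noetherian in Nagata's and related examples. The reverse containment $I^{(nk)} \subseteq (I^{(n)})^k$ is what must be verified, and this is carried out in Proposition \ref{prop:Ink} by checking each explicit minimal generator of $I^{(nk)}$. Once that delicate generator-by-generator verification is granted, the passage to Noetherianity of the full algebra is formal.
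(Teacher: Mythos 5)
Your proposal is correct and follows exactly the paper's own argument: identify the $n$th Veronese subalgebra with $\rees(I^{(n)})$ via Proposition \ref{prop:Ink}, observe that this ordinary Rees algebra is a finitely generated $R$-algebra and hence Noetherian, and then invoke Schenzel's criterion \cite[Theorem 1.3]{refSch} to pass back to $\srees(I)$. No differences worth noting.
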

 
 \begin{proof}
Let $I=(x(y^n-z^n),y(z^n-x^n),z(x^n-y^n))$, with $n\geq 3$. Then $\srees(I)^{(n)}=\rees(I^{(n)})$ by Proposition \ref{prop:Ink}. In particular, $\srees(I)^{(n)}$ is finitely generated. It follows from a result of Schenzel \cite[Theorem 1.3]{refSch} that the symbolic Rees algebra $\srees(I)$ is Noetherian whenever any of its Veronese subrings is Noetherian. In our case, we know that $\srees(I^{(n)})$ is Noetherian, whence the desired conclusion follows.
 \end{proof}
 
 \begin{rem}
 As mentioned in Remark \ref{rem:comp}, Harbourne and Huneke \cite[Proposition 3.5]{refHH} give a condition guaranteeing that a symbolic Rees algebra is Noetherian. In fact,  they wonder \cite[Remark 3.13]{refHH} if this condition is also necessary. Theorem \ref{thm:noeth} shows that this is not the case as $I=(x(y^n-z^n),y(z^n-x^n),z(x^n-y^n))$ does not satisfy the condition if  $n\geq 4$. 
 \end{rem}
 
 \section{Minimal reductions for Fermat ideals} 
 \label{sect5}

Using our detailed knowledge of symbolic powers of Fermat ideals allows us to describe some explicit minimal homogeneous reductions.
 
Let  $J\subset I$ be ideals, then $J$ is said to be a {\em reduction} of $I$ if there exists a non-negative integer $t$ such that $I^{t+1}=JI^t$.  The reduction $J$ is called {\em minimal} if no ideal strictly contained in $J$ is in turn a reduction of $I$.

The minimum integer $n$ with the property $I^{t+1}=JI^t$ for a fixed reduction $J$ of $I$ is called the {\em reduction number} of $I$ with respect to $J$.
In this section we give a description of a homogeneous ideal that is

The following notation will be used in the proof of Proposition \ref{prop:reduction} below: given a homogeneous ideal $I$, the least degree of a non-zero element of $I$ (hence also of a minimal generator of $I$) will be denoted $\alpha(I)$ and the largest degree of a minimal generator of $I$ will be denoted $\beta(I)$.

\begin{prop}
     \label{prop:reduction}
Let $n \ge 3$ be an integer  and consider the ideal $I = (x f, y g, z h)$ of the Fermat configuration, where  $f=y^n-z^n, g=z^n-x^n, h=x^n-y^n \in R=\field[x,y,z]$. Then 
\begin{enumerate}
\item If $n\geq 4$, $I^{(n)}$ has no homogeneous reduction with two generators.
\item A homogeneous minimal reduction of $I^{(n)}$  is 
$$J=\begin{cases}
(fgh,\ g f^2x^n+hg^2y^n+fh^2z^n), &\text{if } n=3\\
(f^{n-2}gh, \ fg^{n-2}h, \ g^{n-2}f^2x^n+h^{n-2}g^2y^n+f^{n-2}h^2z^n), &\text{if } n \ge 4
\end{cases}
$$
and in either case the reduction number of $I$ with respect to $J$ is 1.
\end{enumerate}
\end{prop}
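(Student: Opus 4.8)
The plan is to analyze all three assertions through the fiber cone (special fiber ring) $F=\bigoplus_{k\ge 0}(I^{(n)})^{k}/\mathfrak m (I^{(n)})^{k}$ of $I^{(n)}$, where $\mathfrak m=(x,y,z)$. By Proposition~\ref{prop:Ink} we have $(I^{(n)})^{k}=I^{(nk)}$, so $F_{k}=I^{(nk)}/\mathfrak m I^{(nk)}$ inherits the $R$-grading of $I^{(nk)}$. Reading the minimal generators off Theorem~\ref{thm:gens symb powers}, the only nonzero bigraded pieces are
\[
\dim_{K}F_{(kn^{2},\,k)}=(n-3)k+1,\qquad \dim_{K}F_{(kn^{2}+in,\,k)}=3n\quad(1\le i\le k).
\]
I shall use two standard facts: a homogeneous ideal $(a_{1},\dots,a_{s})\subseteq I^{(n)}$ is a reduction of $I^{(n)}$ if and only if $F$ is a finite module over the bigraded subalgebra $K[\bar a_{1},\dots,\bar a_{s}]$ generated by the images $\bar a_{i}\in F_{1}$; and the reduction number of $I^{(n)}$ with respect to $J$ is at most one if and only if $I^{(2n)}=J\,I^{(n)}$.

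The key object is the bottom strand $D=\bigoplus_{k}F_{(kn^{2},k)}$. Since the degree-$kn^{2}$ generators of $I^{(nk)}$ are exactly $(fgh)^{k}(f,g)^{(n-3)k}$, for $n\ge 4$ the algebra $D$ is the $(n-3)$rd Veronese subalgebra of $K[f,g]$ (the cone over the rational normal curve of degree $n-3$), so $\dim D=2$; for $n=3$ it is a polynomial ring in one variable, $\dim D=1$. Let $(a_{1},\dots,a_{s})$ be any homogeneous reduction. As $F_{(d,1)}\ne 0$ only for $d\in\{n^{2},\,n^{2}+n\}$, every nonzero $\bar a_{i}$ has $\deg a_{i}\in\{n^{2},\,n^{2}+n\}$. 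Multiplication by a degree-$(n^{2}+n)$ element raises the $R$-degree off the bottom strand, so finiteness of $F$ forces $D$ to be finite over the subalgebra generated by those $\bar a_{i}$ of degree $n^{2}$; since $\dim D=2$ this needs at least two such generators when $n\ge 4$. Because every degree-$n^{2}$ element of $I^{(n)}$ is divisible by $fgh$, a reduction cannot consist of these alone (it would have height one), so a generator of degree $n^{2}+n$ is also required. Hence every homogeneous reduction of $I^{(n)}$ has at least three generators when $n\ge 4$, which is statement~(1); for $n=3$ the same argument, with $\dim D=1$, gives at least two.

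Write $A=f^{n-2}gh$, $B=fg^{n-2}h$, and $C=f^{2}g^{n-2}x^{n}+g^{2}h^{n-2}y^{n}+f^{n-2}h^{2}z^{n}$ for the generators of $J$ (when $n=3$, $A=fgh$ and $J=(A,C)$); one checks directly that $A,B\in (fgh)(f,g)^{n-3}$ and $C\in I^{(n)}$, so $J\subseteq I^{(n)}$. For the minimality of $J$ in~(2), suppose $J'\subsetneq J$ were a reduction. Then $[J']_{n^{2}}\subseteq[J]_{n^{2}}=\langle A,B\rangle$, and the bottom-strand bound forces equality, so $A,B\in J'$. Since the strands with $i\ge 1$ are nonzero and unreachable using degree-$n^{2}$ elements, $J'$ must contain a degree-$(n^{2}+n)$ element with nonzero image in $F_{1}$; as $R_{n}\langle A,B\rangle\subseteq\mathfrak m I^{(n)}$, such an element has a nonzero $C$-component, whence $C\in J'$ and $J'=J$, a contradiction. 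The case $n=3$ is identical.

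It remains to prove the reduction number is one, i.e.\ $I^{(2n)}\subseteq J\,I^{(n)}$ (the reverse inclusion being immediate from $J\subseteq I^{(n)}$ and $(I^{(n)})^{2}=I^{(2n)}$). Writing $I^{(n)}=P_{0}+P_{1}+P_{2}+P_{3}$ with $P_{0}=(fgh)(f,g)^{n-3}$, $P_{1}=f^{2}g^{n-2}x(x,y)^{n-1}$, $P_{2}=g^{2}h^{n-2}y(y,z)^{n-1}$, $P_{3}=f^{n-2}h^{2}z(z,x)^{n-1}$, several families of generators of $I^{(2n)}$ (Theorem~\ref{thm:gens symb powers}, $k=2$) come for free: the identity $(f^{n-3},g^{n-3})(f,g)^{n-3}=(f,g)^{2(n-3)}$ gives $(A,B)P_{0}=P_{0}^{2}$, the family $(fgh)^{2}(f,g)^{2(n-3)}$, while the two $i=1$ families in $x$ and $y$ equal $A\,P_{1}$ and $B\,P_{2}$. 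The main obstacle is the remaining families — the three $i=2$ families and, most tellingly, the $i=1$ family $f^{n-1}gh^{n}z(z,x)^{n-1}=P_{3}\cdot fgh^{n-2}$. These do not arise from a single product of $J$ with some $P_{j}$, precisely because $J$ contains only two of the three cyclic monomials $f^{n-2}gh,\ fg^{n-2}h,\ fgh^{n-2}$ — the third equals $A$ when $n=3$ (which is why that case is unproblematic) but is replaced by the higher-degree $C$ for $n\ge 4$. To reach them I would multiply $C$ by suitable generators, e.g.\ $C\,P_{1}$, $C\,P_{3}$, and $C\cdot(fgh^{n-2})$ using $fgh^{n-2}=fgh\cdot h^{n-3}\in P_{0}$, and then cancel the extraneous summands via the linear relation $f+g+h=0$ together with $f=y^{n}-z^{n}$, $g=z^{n}-x^{n}$, $h=x^{n}-y^{n}$. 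This relation-driven cancellation is the tedious heart of the proof; carrying it out for every family yields $I^{(2n)}=J\,I^{(n)}$, and since $J\ne I^{(n)}$ the reduction number equals exactly one.
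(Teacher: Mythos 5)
Your fiber--cone argument for part (1) and for the minimality of $J$ is sound and takes a genuinely different route from the paper's: the paper rules out a two--generated homogeneous reduction by a direct Hilbert--function count in degree $2n^2$ (forcing $2(n-3)+1\le n-2$), whereas you extract the same conclusion from the Krull dimension of the bottom strand $D$ of the special fiber ring (the $(n-3)$rd Veronese of $K[f,g]$) together with the unboundedness of the higher strands. Both rest on Proposition~\ref{prop:Ink} and on the minimality of the generating set in Theorem~\ref{thm:gens symb powers}, which you are entitled to use, and your version makes the role of the analytic spread more transparent.

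The genuine gap is the verification that $J$ is a reduction at all, i.e.\ the inclusion $I^{(2n)}\subseteq J\,I^{(n)}$. This single inclusion carries all of part (2): without it $J$ is not known to be a reduction, the reduction--number claim is vacuous, and your minimality argument (which compares a hypothetical $J'\subsetneq J$ against a $J$ already known to be a reduction) has nothing to stand on. Your text for this step is a plan, not a proof (``I would multiply $C$ by suitable generators \dots carrying it out for every family yields \dots''), and the plan misidentifies where the work lies. The $i=1$ family in $z$, namely $f^{n-1}gh^{n}z(z,x)^{n-1}$, is not an obstacle: it equals $f^{n-2}gh\cdot fh^{n-1}z(z,x)^{n-1}$, and $fh^{n-1}z(z,x)^{n-1}\subseteq (f,g)^n\cap(x,y)^n\cap(y,z)^n\cap(z,x)^n=I^{(n)}$, so it already lies in $A\cdot I^{(n)}$ --- the cofactor need not be one of the listed generators of $I^{(n)}$, only an element of it. The cases that actually require the third generator $C$ are the three $i=2$ families, e.g.\ $f^{4}g^{2n-4}x^{n+1}(x,y)^{n-1}$; the paper reaches these by expanding $C\cdot f^{2}g^{n-2}x(x,y)^{n-1}\subseteq J I^{(n)}$ and checking that the two cross terms factor through $J$, for instance
\[
h^{n-2}g^{2}y^{n}\cdot f^{2}g^{n-2}x(x,y)^{n-1}=fg^{n-2}h\cdot fh^{n-3}g^{2}y^{n}x(x,y)^{n-1}
\]
with $fh^{n-3}g^{2}y^{n}x(x,y)^{n-1}\subseteq I^{(n)}$. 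Until these verifications (or equivalent ones) are actually carried out for every family of generators of $I^{(2n)}$, part (2) is not established.
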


\begin{proof}
$(1)$ Suppose $n\geq 4$ and $J=(\sigma, \tau)$ is a homogeneous minimal reduction for $I$ so that $(I^{(n)})^kJ=(I^{(n)})^{k+1}$ holds for some integer $t\geq 1$, or equivalently, by the identities proven in Proposition \ref{prop:Ink}, $I^{(nk)}J=I^{(n(k+1))}$. Without loss of generality we may assume that $\deg (\sigma)\leq \deg(\tau)$. 

We make the following claims:
\begin{inparaenum}
\item[(i)] $\deg(\sigma)=n^2$,
\item[(ii)] $k=1$.
\end{inparaenum}
To prove the first of these claims, notice that by Theorem \ref{thm:recursion}, $\alpha(I^{(nk)})=n^2k$ and $\alpha(I^{(n(k+1))})=n^2(k+1)$. We must have $\alpha(I^{(nk)}J)=\alpha(I^{(n(k+1))})$, so $n^2k+\deg(\sigma)=n^2(k+1)$, which gives $\deg(\sigma)=n^2$. To prove the second claim we see that $\sigma\in J\subseteq I^{(nk)}$, therefore $\alpha(I^{(nk)})=n^2k\leq \deg(\sigma)=n^2$. It follows that $k=1$ and thus we have $I^{(n)}J=I^{(2n)}$.

It follows from the description of the minimal generators of $I^{(n)}$ and $I^{(2n)}$ of Theorem \ref{thm:gens symb powers} that $(fgh)^2(f,g)^{2(n-3)}\subseteq \sigma \cdot fgh (f,g)^{n-3}$. Comparing the Hilbert function of these two ideals in degree $2n^2$ yields $2(n-3)+1\leq n-2$, i.e $n\leq 3$, which is a contradiction.

$(2)$ is equivalent to showing that $JI^{(n)}=I^{(2n)}$. We prove this statement for  
$$J=(f^{n-2}gh, \ fg^{n-2}h, \ g^{n-2}f^2x^n+h^{n-2}g^2y^n+f^{n-2}h^2z^n),$$
which covers both cases (with some redundancy for $n=3$).
By Remark \ref{rem:compact gens symb powers}, we have
\begin{eqnarray*}
I^{(2 n)} &= &(fgh)^2(f,g)^{2(n-3)} \\
&& + x (x,y)^{n-1} g^{n-2}f^2 \cdot (f^{n-2} g h, \ g^{n-2}f^2 x^n)\\[3pt]
&& + y (y,z)^{n-1} h^{n-2}g^2 \cdot (f g^{n-2} h, \ h^{n-2}g^2 y^n)\\[3pt] 
&& + z (z,x)^{n-1} f^{n-2}h^2 \cdot (g f h^{n-2}, \ f^{n-2}h^2z^n). 
\end{eqnarray*}
The standard minimal generators of the ideal $(fgh)^2(f,g)^{2(n-3)}$ can be written as
$$(fgh)^2f^ig^{2(n-3)-i}=
\begin{cases}
fg^{n-2}h\cdot (fgh)f^{i}g^{n-3-i}& \text{if } 0\leq i \leq n-3\\
f^{n-2}gh\cdot (fgh)f^{i-n+3}g^{2(n-3)i} & \text{if }n-3\leq i\leq 2(n-3),
\end{cases}$$
showing that $(fgh)^2(f,g)^{2(n-3)}\subset(f^{n-2}gh,  fg^{n-2}h)(fgh)(f,g)^{n-3}\subset JI^{(n)}$. Next note that
$$(g^{n-2}f^2x^n+h^{n-2}g^2y^n+f^{n-2}h^2z^n)f^2g^{n-2}x(x,y)^{n-1}\subseteq JI^{(n)}.$$
But 
\begin{eqnarray*}
&(g^{n-2}f^2x^n+h^{n-2}g^2y^n+f^{n-2}h^2z^n)f^2g^{n-2}x(x,y)^{n-1} =\\
 &g^{n-2}f^2x(x,y)^{n-1} \cdot g^{n-2}f^2x^n+ fg^{n-2}h\cdot (fh^{n-3}g^2y^nx(x,y)^{n-1} +f^{n-1}hz^nx(x,y)^{n-1})
\end{eqnarray*}
and the last term in the sum is contained in $JI^{(n)}$, therefore $g^{n-2}f^2x(x,y)^{n-1} \cdot g^{n-2}f^2x^n\subset JI^{(n)}$. Similarly it can be shown that $h^{n-2}g^2y(y,z)^{n-1} \cdot h^{n-2}g^2y^n\subset JI^{(n)}$ and $f^{n-2}h^2z(z,x)^{n-1} \cdot f^{n-2}h^2z^n\subset JI^{(n)}$. The other terms in the description of $I^{(2n)}$ being clearly contained in $JI^{(n)}$, we obtain the containment $I^{2n)}\subseteq JI^{(n)}$. The converse containment being trivial, equality follows.

The fact that $J$ does not contain another homogeneous reduction $L$ for $I^{(n)}$ follows from part $(1)$ of this proposition. A careful reading of the last paragraph in the proof of $(1)$ shows that, if $n\geq 4$, any homogeneous reduction for $I^{(n)}$ must contain at least two generators of degree $n^2$. Hence $(f^{n-2}gh, fg^{n-2}h)\subseteq L$. Since $L$ cannot be 2-generated by $(1)$, it must contain a multiple of the third generator of $J$. Comparing the degrees of the generators of $LI^{(n)}$ and $I^{(2n)}$, one sees that this polynomial must have degree $n^2+n$, the same as the third generator of $J$.  Thus the conclusion $L=J$ follows.
\end{proof}

\begin{rem}
Computational evidence suggests that  each ideal $I^{(n)}$ of Proposition \ref{prop:reduction}, considered in the localization of $R= K[x,y,z]$ at the ideal $(x, y, z)$,  has a minimal reduction generated by two polynomials. This is true for $n=3$ by part (2) of Proposition \ref{prop:reduction}. However, for $n\geq 4$ we have not been able to find a reduction of $I^{(n)}$ in $R$ with only two minimal generators. By part (1) of Proposition \ref{prop:reduction}, such a reduction would necessarily not be  a homogeneous ideal.
\end{rem}


\section*{Acknowledgements}
Computations with {\it Macaulay2} \cite{M2} were essential for discovering and checking the results in this paper. The second author expresses her gratitude to AWM for supporting her through a mentoring travel grant and to the University of Kentucky for hosting her visits during which many of these results were obtained.

\bigskip

\end{document}